\newcommand{\CM}{Cohen-Macaulay}
\newcommand{\wrt}{with respect to}
\newcommand{\B}{\mathcal{B} }
\newcommand{\n}{\mathfrak{n} }
\newcommand{\m}{\mathfrak{m} }
\newcommand{\M}{\mathfrak{M} }
\newcommand{\q}{\mathfrak{q} }
\newcommand{\R}{\mathcal{R} }
\newcommand{\Sc}{\mathcal{S} }
\newcommand{\Z}{\mathbb{Z} }
\newcommand{\rt}{\rightarrow}
\newcommand{\xar}{\longrightarrow}
\newcommand{\ov}{\overline}
\newcommand{\wt}{\widetilde }
\newcommand{\image}{\operatorname{image}}
\newcommand{\depth}{\operatorname{depth}}
\newcommand{\amp}{\operatorname{amp}}
\newcommand{\coker}{\operatorname{coker}}
\theoremstyle{plain}
\newtheorem{theorem}{Theorem}[section]
\newtheorem{proposition}[theorem]{Proposition}
\theoremstyle{definition}
\newtheorem{remark}[theorem]{Remark}
\newtheorem{example}[theorem]{Example}
\theoremstyle{remark}
\begin{document}

\title[Bockstein cohomology ]
{Bockstein cohomology of associated graded rings}
\author{Tony~J.~Puthenpurakal}
\date{\today}
\address{Department of Mathematics, IIT Bombay, Powai, Mumbai 400 076, India}

\email{tputhen@math.iitb.ac.in}
\subjclass{Primary 13A30; Secondary 13D40, 13D07}

\begin{abstract}
Let $(A,\m)$ be a \CM \ local ring of dimension $d$ and let $I$ be an $\m$-primary ideal. Let $G$ be the associated graded ring of $A$ \wrt \ $I$ and let $\R = A[It,t^{-1}]$   be the extended Rees ring of $A$ \wrt \  $I$.  Notice $t^{-1}$ is a non-zero divisor on $\R$ and $\R/t^{-1}\R = G$. So we have \textit{Bockstein operators} $\beta^i \colon H^i_{G_+}(G)(-1) \rt H^{i+1}_{G_+}(G)$ for $i \geq 0$. Since $\beta^{i+1}(+1)\circ \beta^i = 0$ we have \textit{Bockstein cohomology} modules $BH^i(G)$ for $i = 0,\ldots,d$. In this paper we show that certain natural conditions on  $I$ implies vanishing of some Bockstein cohomology modules.
\end{abstract}

\maketitle
\section*{Introduction}
Let $(A,\m)$ be a \CM \ local ring of dimension $d$ and let $I$ be an $\m$-primary ideal. The \emph{Hilbert function} of $A$ \wrt \ $I$ is 
$H^I(A,n) = \ell(I^n/I^{n+1})$.  Here $\ell(-)$ denotes length as an $A$-module. A fruitful area of research has been to study the interplay between Hilbert functions and properties of blowup algebra's of $A$ \wrt \ $I$, namely the \textit{associated graded ring} $G_I(A) = \bigoplus_{n \geq 0} I^n/I^{n+1}$, the \textit{Rees ring} $\Sc(I) = \bigoplus_{n \geq 0}I^n$ and the \textit{extended Rees ring } $\R(I) = \bigoplus_{n \in \Z}I^n$ (here $I^n = A$ for $n \leq 0$ and $\R(I)$ is considered as a subring of $A[t,t^{-1}]$). See the texts
 \cite[Section 6]{VaSix} and  \cite[Chapter 5]{VasBook} for nice surveys on this subject. Graded local cohomology  has played an important role in this subject.
 For  various applications
   see  \cite[4.4.3]{BH},\cite{Durham}, \cite{VerJoh},
\cite{Blanc},  \cite{ItN}, \cite{Tr}
   and \cite{HMc}. 

Set $G = G_I(A)$. Let $H^i(G)$ denote $i^{th}$-local cohomology module of $G$ \wrt \ $G_+ = \bigoplus_{n>0}I^n/I^{n+1}$. 
Notice $t^{-1}$ is a non-zero divisor on $\R(I)$ and $\R(I)/t^{-1}\R(I) = G$. So we have \textit{Bockstein operators} $\beta^i \colon H^i(G)(-1) \rt H^{i+1}(G)$ for $i \geq 0$. Since $\beta^{i+1}(+1)\circ \beta^i = 0$ we have \textit{Bockstein cohomology} modules $BH^i(G)$ for $i = 0,\ldots,d$. Despite being natural, Bockstein cohomology groups of associated graded rings have not been investigated before. The goal of this paper is to compute it in some cases.

It is well known that for $n \gg 0$ we have $\depth G_{I^n}(A) \geq 1$. It can occur that $\depth G_{I^n}(A) = 1$ for all $n \gg 0$; see \cite[7.13]{Pu5}. Bockstein cohomology fares better. In Theorem \ref{asymp} we prove that if $d \geq 2$ then for
for all $n \gg 0$ we have  
$BH^i(G_{I^n}(A)) = 0$ for $i = 0,1$.

The formal series $H^I(A,z) = \sum_{n\geq 0}H^I(A,n)z^n$ is called the Hilbert series of $A$. It is well-known that
\[
H^I(A,z) = \frac{h_I(z)}{(1-z)^d} \quad \text{where} \ h_I(z) \in \Z[z], \ \text{and} \  h_I(1) > 0.
\]
If $f$ is a polynomial  we use $f^{(i)}$ to denote the $i$'th formal derivative of $f$. For $i \geq 0$ the numbers $e_i^I(A) = h^{(i)}_I(1)/i!$ are called the \textit{Hilbert coefficients} of $A$. (If $I = \m$ then we drop the superscript $\m$). The number $e_0^I(A)$ is called the \emph{multiplicity} of $A$ \wrt \ $I$. 
As $A$ is \CM, the Hilbert coefficients satisfy various constraints, cf. \cite{Pu1}. There has been a lot of
 work  to understand $G_I(A)$ when the Hilbert coefficients satisfy the boundary values; see \cite{rv}.

Narita proved that $e_2^I(A) \geq 0$, see \cite{Nar}. Furthermore if $\dim A = 2$ and $e_2^I(A) = 0$ then $G_{I^n}(A)$ is \CM \ for all $n \gg 0$. Narita's result is false in dimension $\geq 3$. There are examples of three dimensional \CM \ rings with $e_2^I(A) = 0$ and
$\depth G_{I^n}(A) = 1$ for all $n \gg 0$, see \cite[8.5]{Pu6}. In Theorem \ref{e2} we prove that if $\dim A = 3$ and $e_2^I(A) = 0$ then for all $n \gg 0$ we have$BH^i(G_{I^n}(A)) = 0$ for $i<3$. We prove a similar result when $I$ is integrally closed and $e^I_2(A) = e_1^I(A) - e_0^I(A) + \ell(A/I)$. 

Recall an ideal $I$ is said to be normal if $I^n$ is integrally closed for all
 $n \geq 1$. If $I$ is a normal $\m$-primary ideal the Huckaba and Huneke showed that $\depth G_{I^n}(A) \geq 2$ for all $n \gg 0$, see \cite[Theorem 3.1]{HHu}. Moreover they gave an example of an $\m$-primary normal ideal $I$ in a $3$-dimensional \CM \ local ring with $\depth G_{I^n}(A) = 2$ for all $n \geq 1$, see \cite[3.11]{HHu}. In Theorem \ref{normal}
we prove that if $\dim A \geq 3$ and $I$ is a normal ideal then for all $n \gg 0$ we have $BH^i(G_{I^n}(A)) = 0$ for $i<3$.

Finally we consider the case when $e_2 = e_1 - e_0 + 1$. If $e_2 \leq 2$ the the structure of $G_\m(A)$ is well understood, see  \cite[Section 6]{VaSix}.  In Theorem \ref{chi2} we prove that if $\dim A =3$ and $e_2 = e_1 - e_0 + 1 = 3$ then  $BH^i(G_\m(A)) = 0$ for $i<3$.

Although we are primarily interested in the case for \CM \ rings we prove most of our results for \CM  \ modules $M$. It is technically easier to work with modules. Also note that Bockstein cohomology is a module theoretic construct. So for this reason too it is convenient to work with modules.  Let $G_I(M) = \bigoplus_{n \geq 0}I^nM/I^{n+1}M$ be the associated graded module of $M$ \wrt \ $I$. It can be easily proved that $G_I(M)$ is a finitely generated $G_I(A)$-module.

Here is an overview of the contents of the paper. In section one we recall the notion of Bockstein cohomology and then discuss some properties of it that we need. We give an alternate  construction of Bockstein operators using the $\R(I)$ module $L^I(M) = \bigoplus_{n \geq 0}M/I^{n+1}M$. In section two we discuss some properties of $L^I(M)$ which were proved in \cite{Pu5} and are needed for this paper. In section three we give a condition which characterizes when $BH^0(G_I(M)) = 0$. We also prove a rigidity result for Bockstein cohomology. In section four we discuss the effect on Bockstein operators modulo an element $x^*$ which is $G_I(M)$-regular. We prove an analogue of Sally descent for Bockstein cohomology.  In the next four sections we prove our results. 

\section{Bockstein Cohomology}
In this paper all rings are commutative Noetherian and all modules are assumed to be finitely generated unless specified otherwise.
In this section we first recall a very general construction of Bockstein cohomology. We then specialize to the case of associated graded modules. We then give an alternate description of Bockstein cohomology which is useful for our computations.

\s \emph{General construction of  Bockstein Cohomology}.

 Let $R$ be a ring, $M$ an $R$-module and $x$ a non-zerodivisor on $M$. We have a natural exact sequence
\[
0 \rightarrow \frac{M}{xM} \xrightarrow{\alpha} \frac{M}{x^2M} \xrightarrow{\pi} \frac{M}{xM} \rightarrow 0.
\]
Here $\pi$ is the natural projection map and $\alpha(m + xM) = xm + x^2M$. \\
Let $F \colon Mod(R) \rightarrow Mod(R)$ be any left exact functor. Then note that we have natural maps
\[
\beta^i  \colon RF^i(M/xM) \rightarrow RF^{i+1}(M/xM).
\]
We call $\beta^i$ the $i^{th}$ \emph{ Bockstein operator} on $M/xM$ with respect to $F$.
Consider the natural exact sequence
\[
0 \rt M \xrightarrow{x} M \xrightarrow{\rho} M/xM \rt 0.
\]
So we have an exact sequence
\[
\rt RF^i(M/xM)\xrightarrow{\delta^i} RF^{i+1}(M) \rt RF^{i+1}(M) \xrightarrow{RF^{i+1}(\rho)} RF^{i+1}(M/xM) \rt
\]
It can be easily shown that $\beta^i =  RF^{i+1}(\rho)\circ \delta^i$.  Since $\delta^{i+1} \circ RF^{i+1}(\rho) = 0$ we get
that $\beta^{i+1} \circ \beta^{i} = 0$ for all $i \geq 0$. Thus we have a complex 
\[
\cdots \xrightarrow{\beta^{i-1}} RF^i(M/xM) \xrightarrow{\beta^{i}} RF^{i+1}(M/xM) \xrightarrow{\beta^{i+1}} RF^{i+2}(M/xM) \cdots
\]
The cohomology of this complex is denoted by $BF^*(M/xM)$ and is called the \emph{Bockstein cohomology} of $M/xM$ with respect to $F$.

\s \textit{Bockstein Cohomology of Associated graded modules }

Let $\R(I) = \bigoplus_{n \in \mathbb{Z}}I^n$ be the extended Rees-ring of $A$ with respect to $I$.  Here $I^n = A$ for all $n \leq 0$ and $\R(I)$ is considered as a subring of $A[t,t^{-1}]$. Let $\R(I)_+$ to be the ideal in $\R(I)$ generated by $ \bigoplus_{n >0}I^n$.
Let $M$ be an $A$-module. Let $\R(I,M) = \bigoplus_{n \in \mathbb{Z}}I^nM$ be the extended Rees-module of $M$ with respect to $I$. 

Clearly $t^{-1}$ is a non-zero divisor on $\R(I,M)$. Note $\R(I,M)/t^{-1}\R(I,M) = G_I(M)$. We have an exact sequence (after a shift)
\[
0 \rightarrow G_I(M) \rightarrow \R(I,M)/t^{-2}\R(I,M)(-1) \rightarrow G_I(M)(-1) \rightarrow 0.
\]
Here 
$$\frac{\R(I,M)}{t^{-2}\R(I,M)} = M/IM \oplus M/I^2M \oplus IM/I^3M\oplus I^2M/I^4M \oplus \cdots \oplus I^{n-1}M/I^{n+1}M \cdots,$$
with $M/IM$ sitting in degree $-1$.

Let $\Gamma_{\R(I)_+} \colon Mod(\R(I)) \rightarrow Mod(\R(I))$ be the $\R(I)_+$-torsion functor. So by the general theory we have Bockstein homomorphisms
\[
\beta^i \colon H^i_{G_+}(G_I(A))(-1) \rightarrow H^{i+1}_{G_+}(G_I(A)),
\]
and we have Bockstein cohomology modules
$$BH^i_{G_+}(G_I(A))  = \ker(\beta^{i}(+1))/\image(\beta^{i-1})   \quad \text{ for all $i \geq 0$}. $$
Set $\beta^i_I(M) = \beta^i(G_I(M))$.

\begin{remark}
Let $(A,\m) \rt (A^\prime,\m^\prime)$ be a flat extension with $\m A^\prime = \m^\prime$. Set $I^\prime = IA^\prime$ and $M^\prime = M \otimes_A A^\prime$. Then it is clear that 
$$\beta^i_{I^\prime}(M^\prime)  = \beta^i_I(M)\otimes_A A^\prime.$$
It follows that for all $i \geq 0$ we have 
\[
BH^i_{G^\prime_+}(G_{I^\prime}(A^\prime)) \cong   BH^i_{G_+}(G_I(A))\otimes_A A^\prime.
\]
We use this primarily when the residue field $k$ of $A$ is finite. In this case we set $A^\prime = A[X]_{\m A[X]}$. Note that the residue field of $A^\prime$ is $k(X)$ which is infinite. Thus for many computations we may assume that the residue field of $A$ is infinite. 
\end{remark}

\s Although for definition of Bockstein cohomology we used the extended Rees algebra, for computation it is easier to use the following $\R(I)$-module:
$$ L^I (M)= \bigoplus_{n \geq 0} \frac{M}{I^{n+1}M}.$$
To see that $L^I(M)$ is an $\R(I)$-module, note that we have an exact sequence 
\[
0 \rt \R(I,M) \rt M[t,t^{-1}] \rt L^I(M)(-1) \rt 0.
\]
By this exact sequence we can give $L^I(M)$ a structure of $\R(I)$-module.
Note that $L^I(M)$ is \textit{not} finitely generated as a $\R(I)$-module. For $r \geq 0$ consider the finitely generated 
submodules $L^I_r(M) $ of $L^I(M)$ defined as follows:
\[
L^I_r (M)= \left<\bigoplus_{n =0}^{r} \frac{M}{I^{n+1}M} \right>.
\]
Notice that $L^I_0(M) = G_I(M)$ and $L^I_1(M) = \R(I,M)/t^{-2}\R(I,M)(-1)$.

\s \textit{Definition of Bockstein cohomology via $L^I_r(M)$}

Set $L_r = L_r^I(M)$, $L = L^I(M)$ and $G = G_I(M)$.  For systemic reasons set $L_{-1} = 0$. For all $r \geq 0$ we have an exact sequence
\begin{equation}\label{Lr-eq}
  0 \rightarrow L_{r-1} \rightarrow L_r \xrightarrow{\rho_r} G(-r) \rightarrow 0. 
\end{equation}
For $r = 1$ we get
\[
0 \rightarrow G \rightarrow L_1 \rightarrow G(-1)\rightarrow 0.
\]
This is nothing but the defining exact sequence for Bockstein cohomology. For $r \geq 0$ we first
take local cohomology   of the exact sequence (\ref{Lr-eq}) for $r+1$ \wrt \ $\R(I)_+$. We obtain
\begin{equation}\label{alt-defn-1}
\cdots H^i(L_{r+1}) \rightarrow H^i(G(-r-1)) \xrightarrow{\delta^i_{r+1}} H^{i+1}(L_r) \cdots
\end{equation}
Taking local cohomology of the  exact sequence (\ref{Lr-eq}) for $r$ we obtain
\begin{equation}\label{alt-defn-2}
\cdots \rt H^{i+1}(L_{r}) \xrightarrow{H^{i+1}(\rho_r)} H^{i+1}(G(-r)) \rt  \cdots
\end{equation}
So we obtain maps $\alpha^i_r =  H^{i+1}(\rho_r)\circ \delta^i_{r+1} \colon  H^i(G(-r-1)) \rightarrow H^{i+1}(G(-r))$.
Notice that $\alpha^i_0 = \beta^i$. More generally we have

\begin{proposition}\label{Lr}
(with hypotheses as above)
 $$\beta^i = \alpha^i_r(r).$$
\end{proposition}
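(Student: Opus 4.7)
The plan is to prove the equality directly, without induction on $r$, via a single application of naturality of the connecting homomorphism in local cohomology. The key structural input is that a filtration quotient of $L_{r+1}$ agrees with a shift of $L_1$, which pins down a connecting map that we already recognize as a Bockstein.

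First I would verify that the natural inclusions $L_{r-1} \sub L_r \sub L_{r+1}$ produce the short exact sequence
\[ 0 \rt G(-r) \rt L_{r+1}/L_{r-1} \rt G(-r-1) \rt 0, \]
and that there is a canonical isomorphism of graded $\R(I)$-modules $L_{r+1}/L_{r-1} \cong L_1(-r)$ carrying this sequence onto the $(-r)$-shift of the defining Bockstein extension $0 \rt G \rt L_1 \rt G(-1) \rt 0$. A direct inspection of graded pieces suffices: for $k \geq 0$, the degree $r+k$ component of $L_{r+1}/L_{r-1}$ is $I^{k-1}M/I^{k+1}M$ (reading $I^{-1}M := M$), which coincides with the degree $k$ component of $L_1$. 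Since both modules arise as natural subquotients of $L = L^I(M)$, this identification is $\R(I)$-linear, and the outer terms of the two extensions match on the nose.

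Next I would contemplate the commutative diagram with exact rows
\[
\begin{array}{ccccccccc}
0 & \rt & L_r & \rt & L_{r+1} & \rt & G(-r-1) & \rt & 0 \\
  &     & \downarrow \rho_r & & \downarrow & & \parallel & & \\
0 & \rt & G(-r) & \rt & L_{r+1}/L_{r-1} & \rt & G(-r-1) & \rt & 0
\end{array}
\]
in which the left and middle verticals are the quotients by $L_{r-1}$; the left vertical is $\rho_r$ because $L_r/L_{r-1} = G(-r)$. Applying $H^\bullet_{\R(I)_+}$ and invoking naturality of the connecting homomorphism yields $H^{i+1}(\rho_r) \circ \delta^i_{r+1} = \delta'$, where $\delta'$ is the connecting homomorphism of the bottom row. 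By the previous paragraph $\delta' = \beta^i(-r)$, while the left-hand side is $\alpha^i_r$ by definition. Hence $\alpha^i_r = \beta^i(-r)$, which is exactly $\alpha^i_r(r) = \beta^i$.

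The only real obstacle is the first step: checking $\R(I)$-linearity of the identification $L_{r+1}/L_{r-1} \cong L_1(-r)$ and its compatibility with the two displayed extensions. Once this is granted, everything reduces to formal diagram chasing in long exact sequences, so I expect no surprises.
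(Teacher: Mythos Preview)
Your argument is correct. The identification $L_{r+1}/L_{r-1}\cong L_1(-r)$ does hold as graded $\R(I)$-modules: it is induced by the restriction to $L_{r+1}$ of the $r$-fold iterate $\Pi^r\colon L\to L(-r)$ of the map in (\ref{dag}), whose kernel on $L_{r+1}$ is precisely $L_{r-1}$ and whose image is $L_1(-r)$. Under this isomorphism the extension $0\to G(-r)\to L_{r+1}/L_{r-1}\to G(-r-1)\to 0$ becomes the $(-r)$-shift of the defining Bockstein extension, so its connecting map is $\beta^i(-r)$. The commutative square with vertical maps given by reduction modulo $L_{r-1}$ is exactly as you describe, and naturality of the connecting homomorphism then gives $\alpha^i_r=\beta^i(-r)$ in one stroke.

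The paper proceeds differently, by a step-by-step induction on $r$. It uses the projections $\pi_r\colon L_r\to L_{r-1}(-1)$ (coming from $0\to G\to L_r\to L_{r-1}(-1)\to 0$) to build a ladder between the exact sequence $0\to L_r\to L_{r+1}\to G(-r-1)\to 0$ and its shift by one, and reads off $\alpha^i_r=\alpha^i_{r-1}(-1)$; iterating yields $\alpha^i_r(r)=\alpha^i_0=\beta^i$. Your approach collapses all of these steps at once by passing directly to the quotient $L_{r+1}/L_{r-1}$, which is a cleaner and more conceptual route: instead of $r$ commutative squares you need just one, and the only price is the verification of the isomorphism $L_{r+1}/L_{r-1}\cong L_1(-r)$ and its compatibility with the two extensions, which (as you note) is a routine check on graded pieces.
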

\begin{proof}
For $r \geq 0$ we have an exact sequence 
\[
0 \rt G \rt L_r \xrightarrow{\pi_r} L_{r-1}(-1) \rt 0.
\]
Consider the commutative diagram $\mathcal{C}_r$ with exact rows
\[
  \xymatrix
{
 0
 \ar@{->}[r]
  & L_r
    \ar@{->}[d]^{\pi_r}
\ar@{->}[r]
 & L_{r+1}
    \ar@{->}[d]^{\pi_{r+1}}
\ar@{->}[r]
& G(-r-1)
    \ar@{->}[d]^{\xi}
\ar@{->}[r]
 &0
 \\
 0
 \ar@{->}[r]
  & L_{r-1}(-1)
\ar@{->}[r]
 & L_r(-1)
\ar@{->}[r]
& G(-r)(-1)
\ar@{->}[r]
&0
 }
\]
It can be easily shown that $\xi$ is the identity map. So we have a commutative diagram
\[
\xymatrix
{
& H^i(G)(-r-1)
\ar@{->}[r]
\ar@{->}[d]^{H^{i}(\xi)}
& H^{i+1}(L^r)
\ar@{->}[d]^{H^{i+1}(\pi_r)}
\\
&H^i(G(-r))(-1)
\ar@{->}[r]
& H^{i+1}(L^{r-1})(-1)
}
\]
By considering the diagram $\mathcal{C}_{r-1}$ we obtain a commutative diagram
\[
\xymatrix
{
& H^{i+1}(L_r)
\ar@{->}[r]
\ar@{->}[d]^{H^{i+1}(\pi_r)}
& H^{i+1}(G(-r))
\ar@{->}[d]^{H^{i+1}(\xi)}
\\
& H^{i+1}(L_{r-1})(-1)
\ar@{->}[r]
& H^{i+1}(G(-r+1))(-1)
}
\]
Since $H^i(\xi)$  and $H^{i+1}(\xi)$  are identity maps we get that $\alpha^i_r = \alpha_{r-1}^i(-1)$.  So $\alpha^i_r(r) = \alpha^i_{r-1}(r-1)$. Therefore we obtain that
$\alpha^i_r(r) = \alpha^i_0(0)= \beta^i.$
\end{proof}

 \section{Some Properties of $L^{I}(M)$}\label{Lprop}

In this section we collect some of  the properties of $L^{I}(M)$ which we proved in \cite{Pu5}. Throughout this section
$(A,\m)$ is a  local ring with infinite residue field, $M$ is a \emph{\CM }\ module of dimension $r \geq 1$ and $I$ is \emph{an ideal of definition} for
$M$, i.e., $\ell(M/IM)$ is finite.

\s \label{mod-struc} Set $\Sc(I) = A[It]$;  the Rees Algebra of $I$. In \cite[4.2]{Pu5} we proved that \\
$L^{I}(M) = \bigoplus_{n\geq 0}M/I^{n}M$ is a $\Sc(I)$-module. Note that we also gave $L^I(M)$ an $\R(I)$-module structure and as $\Sc(I)$ is a subring of $\R(I)$ we have an induced $\Sc$-module structure on $L^I(M)$. It is easily verified that these two $\Sc(I)$-module structures on $L^I(M)$ are the same. 

 \s Set $\M = \M_{\Sc(I)} = \m\oplus \Sc(I)_+$.  In \cite{Pu5} we proved many properties of \\  $H^i_{\M}(L^I(M))$. In this paper we need properties of the local cohomology modules $H^i_{\R(I)_+}(L^I(M))$. Note that $H^i_{\R(I)_+}(L^I(M)) = H^i_{\Sc(I)_+}(L^I(M))$ for all $i \geq 0$.
 For all $i \geq 0$ we also have  natural maps $\theta^{(i)} \colon H^i_{\M}(L^I(M)) \rt 
H^i_{\Sc_+}(L^I(M))$. Our first result is

\begin{proposition}
[with hypotheses as above] For every $i \geq 0$ the map $\theta^{(i)}$ is an isomorphism.
\end{proposition}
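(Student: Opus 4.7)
\emph{Proof strategy.} The plan is to use the Grothendieck composition spectral sequence attached to the inclusion $\Sc(I)_+ \sub \M$. Any $\M$-torsion element is $\Sc(I)_+$-torsion, so $\Gamma_{\M} = \Gamma_{\M} \circ \Gamma_{\Sc(I)_+}$, and this yields the first-quadrant spectral sequence
\[
E_2^{p,q} = H^p_{\M}\bigl(H^q_{\Sc(I)_+}(L^I(M))\bigr) \ \Longrightarrow \ H^{p+q}_{\M}(L^I(M)).
\]
Its $q$-th edge map $H^q_{\M}(L^I(M)) \rt E_2^{0,q} = H^q_{\Sc(I)_+}(L^I(M))$ is exactly $\theta^{(q)}$. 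So the claim reduces to showing that $E_2^{p,q} = 0$ for all $p \geq 1$.

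The key observation is that $L^I(M) = \bigoplus_{n \geq 0} M/I^{n+1}M$ is $\m$-torsion as an $A$-module, since each graded component has finite length (as $I$ is an ideal of definition for $M$). Fix generators $f_1,\ldots,f_r$ of $I$ and compute $H^q_{\Sc(I)_+}(L^I(M))$ by the \v{C}ech complex on $f_1t,\ldots,f_rt$. Every localization of $L^I(M)$ at a product of these elements is still $\m$-torsion, because an $\m$-power annihilator of an element $x$ continues to annihilate the fraction $x/(f_{i_1}t\cdots f_{i_k}t)^m$. Hence each $H^q_{\Sc(I)_+}(L^I(M))$ is $\m$-torsion.

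Since $H^q_{\Sc(I)_+}(L^I(M))$ is also $\Sc(I)_+$-torsion by construction, and since $\m^a x = 0$ together with $(\Sc(I)_+)^b x = 0$ implies $\M^{a+b} x = 0$, each $H^q_{\Sc(I)_+}(L^I(M))$ is $\M$-torsion. Now I would invoke the general fact that if $N$ is $\M$-torsion, then $H^p_{\M}(N) = 0$ for all $p \geq 1$: every element of a generating set of $\M$ acts locally nilpotently on $N$, so $N_y = 0$ for any such generator $y$, and all positive-degree terms of the \v{C}ech complex vanish. Applied with $N = H^q_{\Sc(I)_+}(L^I(M))$, this kills $E_2^{p,q}$ for $p \geq 1$ and forces the spectral sequence to collapse into the desired isomorphism.

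The most delicate point is the identification of the Grothendieck edge morphism with the naturally defined $\theta^{(q)}$; this is a standard diagram chase once one writes both transformations as induced by the inclusion $\Gamma_{\M}(-) \hookrightarrow \Gamma_{\Sc(I)_+}(-)$ of torsion subfunctors. Everything else is routine bookkeeping with \v{C}ech complexes and torsion functors.
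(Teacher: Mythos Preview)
Your argument is correct but takes a genuinely different route from the paper's proof.

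The paper proceeds by a direct-limit reduction: it writes $L = L^I(M)$ as the union of the finitely generated submodules $L_r$, uses that local cohomology commutes with direct limits to get $H^i_{\q}(L) = \varinjlim H^i_{\q}(L_r)$ for $\q \in \{\Sc(I)_+, \M\}$, and then invokes as an ``elementary fact'' that for a finitely generated graded $\Sc(I)$-module whose components all have finite length the comparison map $\theta^{(i)}_r$ is an isomorphism. Passing to the limit finishes the proof. Your approach instead works directly with $L$ and the Grothendieck spectral sequence for the composite $\Gamma_{\M} = \Gamma_{\M}\circ \Gamma_{\Sc(I)_+}$, and collapses it by showing that $H^q_{\Sc(I)_+}(L)$ is $\M$-torsion (using that $L$ is $\m$-torsion and that the \v{C}ech complex preserves this).

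Each approach has its merits. The paper's argument is more elementary---no spectral sequences---but it offloads the real work onto the unproved ``elementary fact'' about finitely generated modules with finite-length components, which itself is most naturally proved by exactly your spectral-sequence (or comparison-of-\v{C}ech-complexes) argument. Your proof is self-contained and more conceptual: it makes transparent that the isomorphism ultimately comes from the $\m$-torsion nature of $L$, which renders the difference between the support ideals $\M$ and $\Sc(I)_+$ invisible to local cohomology. The one place you should be a bit more careful is the identification of the edge map with $\theta^{(q)}$: strictly speaking the edge map lands in $E_2^{0,q} = \Gamma_{\M}\bigl(H^q_{\Sc(I)_+}(L)\bigr)$, which equals $H^q_{\Sc(I)_+}(L)$ only after you have established that the latter is $\M$-torsion; you do establish this, but the logical order in your write-up could be tightened.
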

\begin{proof}
Let $L = L^I(M)$ and for $r \geq 0 $ let $L_r = L_r^I(M)$. Notice
\[
L = \bigcup_{r \geq 0} L_r.
\]
It follows that for every $i \geq 0$ we have an isomorphism
\[
H^i_{\q}(L)  = \varinjlim H^i_{\q}(L_r) \quad \text{where} \ \q \in \{ S_+, \M \}; \ \text{see \cite[3.4.10]{BSh}}.
\]
Note that $L_r$ is a finitely generated $\Sc$-module with each component of finite length. It is elementary fact that in this case the natural maps $\theta^{(i)}_r \colon H^i_{\M}(L_r) \rt 
H^i_{\Sc(I)_+}(L_r)$ are isomorphisms. It is also clear that for all $i \geq 0$ we have
$ \theta^{(i)} = \varinjlim \theta^{(i)}_r$. It follows that $\theta^{(i)}$ is an isomorphism for all $i \geq 0$.
\end{proof}

\s Let $H^{i}(-) = H^{i}_{\M}$ denote the $i^{th}$-local cohomology functor \wrt \ $\M$.
 Recall a graded $\Sc(I)$-module $V$ is said to be
\textit{*-Artinian} if
every descending chain of graded submodules of $V$ terminates. For example if $E$ is a finitely generated $\Sc(I)$-module then $H^{i}(E)$ is *-Artinian for all
$i \geq 0$.

\s \label{zero-lc} In \cite[4.7]{Pu5} we proved that
\[
H^{0}(L^I(M)) = \bigoplus_{n\geq 0} \frac{\wt{I^{n+1}M}}{I^{n+1}M}.
\]
Here $\widetilde{KM}$ denotes the Ratliff-Rush closure of $M$ \wrt \ an ideal $K$.  Recall
\[
\widetilde{KM} = \bigcup_{i \geq 1}K^{i+1}M \colon K^i.
\] 
\s \label{Artin}
For $L^I(M)$ we proved that for $0 \leq i \leq  r - 1$
\begin{enumerate}[\rm (a)]
\item
$H^{i}(L^I(M))$ are  *-Artinian; see \cite[4.4]{Pu5}.
\item
$H^{i}(L^I(M))_n = 0$ for all $n \gg 0$; see \cite[1.10 ]{Pu5}.
\item
 $H^{i}(L^I(M))_n$  has finite length
for all $n \in \mathbb{Z}$; see \cite[6.4]{Pu5}.
\end{enumerate}

\s \label{I-FES} The natural maps $0\rt I^nM/I^{n+1}M \rt M/I^{n+1}M \rt M/I^nM \rt 0 $ induce an exact
sequence of $\Sc(I)$-modules
\begin{equation}
\label{dag}
0 \xar G_{I}(M) \xar L^I(M) \xrightarrow{\Pi} L^I(M)(-1) \xar 0.
\end{equation}
We call (\ref{dag}) \emph{the first fundamental exact sequence}.  We use (\ref{dag}) also to relate the local cohomology of $G_I(M)$ and $L^I(M)$.

\s \label{II-FES} Let $x$ be  $M$-superficial \wrt \ $I$, i.e., $(I^{n+1}M \colon x) = I^nM$ for all $n \gg 0$. Set  $N = M/xM$ and $u =xt \in \Sc(I)_1$. Notice $L^I(M)/u L^I(M) = L^I(N)$.  
For each $n \geq 1$ we have the following exact sequence of $A$-modules:
\begin{align*}
0 \xar \frac{I^{n+1}M\colon x}{I^nM} \xar \frac{M}{I^nM} &\xrightarrow{\psi_n} \frac{M}{I^{n+1}M} \xar \frac{N}{I^{n+1}N} \xar 0, \\
\text{where} \quad \psi_n(m + I^nM) &= xm + I^{n+1}M.
\end{align*}
This sequence induces the following  exact sequence of $\Sc$-modules:
\begin{equation}
\label{dagg}
0 \xar \B^{I}(x,M) \xar L^{I}(M)(-1)\xrightarrow{\Psi_u} L^{I}(M) \xrightarrow{\rho^x}  L^{I}(N)\xar 0,
\end{equation}
where $\Psi_u$ is left multiplication by $u$ and
\[
\B^{I}(x,M) = \bigoplus_{n \geq 0}\frac{(I^{n+1}M\colon_M x)}{I^nM}.
\]
We call (\ref{dagg}) the \emph{second fundamental exact sequence. }

\s \label{long-mod} Notice  $\lambda\left(\B^{I}(x,M) \right) < \infty$. A standard trick yields the following long exact sequence connecting
the local cohomology of $L^I(M)$ and
$L^I(N)$:
\begin{equation}
\label{longH}
\begin{split}
0 \xar \B^{I}(x,M) &\xar H^{0}(L^{I}(M))(-1) \xar H^{0}(L^{I}(M)) \xar H^{0}(L^{I}(N)) \\
                  &\xar H^{1}(L^{I}(M))(-1) \xar H^{1}(L^{I}(M)) \xar H^{1}(L^{I}(N)) \\
                 & \cdots \cdots \\
               \end{split}
\end{equation}

\s \label{Artin-vanish} We will use the following well-known result regarding *-Artinian modules quite often:

Let $L$ be a *-Artinian $\Sc(I)$-module.
\begin{enumerate}[\rm (a)]
\item
If $\psi \colon L(-1) \rt L$ is a monomorphism then $L = 0$.
\item
If $\phi \colon L \rt L(-1)$ is a monomorphism then $L = 0$.
\end{enumerate}

\s\label{Veronese} One huge advantage of considering $L^I(M)$ is that it behaves well \wrt \ the Veronese functor. Notice
\[
\left(L^I(M)(-1)\right)^{<t>} = L^{I^t}(M)(-1) \quad \text{for all} \ t \geq 1.
\]
Also note that $\Sc(I)^{<t>} = \Sc(I^t)$ and that $(\M_{\Sc(I)})^{<t>} = \M_{\Sc(I^t)}$. It follows that for all $i \geq 0$
\[
\left(H^i_{\M_{\Sc(I)}}(L^I(M)(-1)\right)^{<t>} \cong H^i_{\M_{\Sc(I^t)}}(L^{I^t}(M)(-1).
\]
By \ref{Artin}(b) it follows that for for $t \gg 0$ we have $H^0(L^{I^t}(M)) = 0$ and for  $1 \leq i \leq r-1$ we have
\[
H^i(L^{I^t}(M))_{n} = 0 \quad \text{for} \ n \geq 0.
\]

\s \label{power-of-I} Set
 \begin{align*}
\xi_I(M) &:= \underset{0 \leq i \leq r-1}\min\{ \ i \ \mid H^{i}(L)_{-1} \neq 0 \ \text{or} \ \ell(H^{i}(L)) = \infty \}.\\
\amp_I(M) &:= \max\{\ |n| \ \mid H^{i}(L)_{n-1} \neq 0 \  \text{for} \ i = 0,\ldots, \xi_I(M) - 1 \}.
\end{align*}
In \cite[7.5]{Pu5} we showed that
\[
 \depth G_{I^l}(M) = \xi_I(M) \ \text{for all} \ l > \amp_I(M).
\]

\s \label{xi}
By Proposition 9.2 in \cite{Pu5} and its proof  it follows that the following conditions are equivalent:
\begin{enumerate}
\item
$\xi_I(M) \geq 2$
\item
$H^1(L^I(M))_n = 0$ for $n < 0$.
\item
$H^1(L^I(M))_{-1} = 0$.
\item
$H^1(G_I(M))_n = 0 $ for $n < 0$.
\item
$H^1(G_I(M))_{-1} = 0 $.
\end{enumerate}

\section{Vanishing of $BH^0(G)$}
Let $M$ be a Cohen-Macaulay module of dimension $\geq 1$ and  let $I$ be an ideal of definition of $M$. Set $G = G_I(M)$. In this section we characterize when $BH^0(G) = 0$. 
We also prove that if $\dim M \geq 2$ and if $BH^1(G) = 0$ then $BH^0(G) = 0$. 

\s 
It is well-known that $H^0(G) = 0$ if and only if $\widetilde{I^nM} = I^nM$ for all
$n \geq 1$. 

For Bockstein cohomology we have the following result.
\begin{proposition}\label{zero-b}
(with hypotheses as above)
$$BH^0(G) = 0  \Longleftrightarrow \quad \widetilde{I^{j+1}M} \subseteq I^jM  \quad  \forall j \geq 1. $$
\end{proposition}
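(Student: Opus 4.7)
The plan is to identify $\beta^0$ as a connecting homomorphism and then translate its injectivity into a concrete statement about Ratliff-Rush closures.

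By Proposition \ref{Lr} (with $r = 0$), $\beta^0$ coincides with the connecting map $\delta^0_1$ arising from the short exact sequence $0 \to G \to L_1 \to G(-1) \to 0$ of (\ref{Lr-eq}). The resulting long exact sequence in local cohomology begins
\[
0 \to H^0(G) \hookrightarrow H^0(L_1) \xrightarrow{\phi} H^0(G)(-1) \xrightarrow{\beta^0} H^1(G),
\]
so $BH^0(G) = \ker \beta^0 = \image \phi$. Hence $BH^0(G) = 0$ is equivalent to $\phi = 0$, i.e., to the natural inclusion $H^0(G) \hookrightarrow H^0(L_1)$ being an equality.

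The next step is to compute both sides explicitly. From the first fundamental sequence (\ref{dag}) together with \ref{zero-lc}, one reads off $H^0(G)_n = (\widetilde{I^{n+1}M} \cap I^n M)/I^{n+1}M$. Identifying $L_1$ as the $\Sc(I)$-submodule of $L = L^I(M)$ generated by its degree-$0$ and degree-$1$ components, one has $(L_1)_n = (I^{n-1}M + I^{n+1}M)/I^{n+1}M \subseteq L_n = M/I^{n+1}M$ (with the convention $I^j M = M$ for $j \leq 0$). Since the inclusion $L_1 \hookrightarrow L$ is $\Sc(I)$-linear, local cohomology gives $H^0(L_1)_n = H^0(L)_n \cap (L_1)_n = (\widetilde{I^{n+1}M} \cap I^{\max(n-1,0)}M)/I^{n+1}M$. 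In degree $0$ both sides agree; for $n \geq 1$ the equality $H^0(L_1)_n = H^0(G)_n$ reduces to the condition
\[
(*_n) \qquad \widetilde{I^{n+1}M} \cap I^{n-1}M \subseteq I^n M.
\]

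Finally I would show by induction on $j$ that the family $\{(*_n) : n \geq 1\}$ is equivalent to $\widetilde{I^{j+1}M} \subseteq I^j M$ for all $j \geq 1$. The forward direction is immediate. For the converse, the base case $j = 1$ is exactly $(*_1)$ using $I^0 M = M$; for the inductive step, combining $\widetilde{I^{j+1}M} \subseteq \widetilde{I^j M}$ with the inductive hypothesis $\widetilde{I^j M} \subseteq I^{j-1}M$ yields $\widetilde{I^{j+1}M} \subseteq I^{j-1}M$, so that $(*_j)$ collapses to the desired $\widetilde{I^{j+1}M} \subseteq I^j M$. The step of substance is the explicit identification of $H^0(L_1)_n$, which relies on viewing $L_1$ as a submodule of $L^I(M)$ and invoking the Ratliff-Rush description of $H^0(L^I(M))$ from \ref{zero-lc}; once that is in hand, the remaining translation into the stated condition is a brief induction exploiting the nesting of Ratliff-Rush closures.
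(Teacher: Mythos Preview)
Your argument is correct and follows a route close to the paper's, but with one genuine simplification worth noting. Both proofs reduce $BH^0(G)=0$ to the equality $H^0(G)=H^0(L_1)$ via the defining short exact sequence. The paper then invokes Proposition~\ref{Lr} to show that this equality forces $H^0(G)=H^0(L_r)$ for \emph{all} $r$, hence $H^0(G)=H^0(L)$, from which the Ratliff--Rush containment drops out immediately degree by degree. You instead compute $H^0(L_1)$ explicitly as $H^0(L)\cap L_1$, obtaining the intermediate conditions $(*_n)$, and then run an induction on the nesting $\widetilde{I^{j+1}M}\subseteq\widetilde{I^jM}$ to upgrade $(*_n)$ to the full containment. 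Your approach avoids Proposition~\ref{Lr} entirely and is self-contained at the level of $L_1$; the paper's approach trades that elementary induction for the structural fact that all the $\alpha^0_r$ are shifts of $\beta^0$, which yields the stronger statement $H^0(G)=H^0(L)$ in one stroke. One small remark: your citation of Proposition~\ref{Lr} for the identification of $\beta^0$ with $\delta^0_1$ is unnecessary---this is just the observation $\alpha^i_0=\beta^i$ made immediately before that proposition (since $\rho_0$ is the identity on $L_0=G$).
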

\begin{proof}
Set $L = L^I(M)$ and for $r \geq 0$ set $L_r = L^I_r(M)$.
If $ \widetilde{I^{j+1}M} \subseteq I^jM  \quad  \forall j \geq 1$ then note that $H^0(G) =  H^0(L)$.  Notice that for all $r \geq 0$ we have
$$H^0(G) \subseteq H^0(L_r) \subseteq H^0(L).$$
So   $H^0(L_r) = H^0(G)$ for all $r \geq 0$. In particular $H_0(L_1) = H_0(G)$. Consider the exact sequence
\[
0 \rt G \rt L_1 \rt G(-1).
\]
Computing the long exact sequence in cohomology we get that  $\beta^0 \colon H^0(G)(-1) \rt H^1(G)$ is injective. So $BH^0(G) = 0$.

Conversely if $BH^0(G) = 0$ we have that $\beta^0$ is injective. By Proposition \ref{Lr} we get that $\alpha_r$ is injective for all $r \geq 0$.
It follows that for all $r \geq 0$ the natural inclusion  $H^0(L_r) \rt H^0(L_{r+1})$ is an isomorphism. So 
we have that 
\[
H^0(G) = H^0(L_0) = H^0(L_r) \quad \text{for all}\  r \geq 0.
\] 
As $H^0(L)$  has finite length it follows that $H^0(L) = H^0(L_r)$ for all $r \gg 0$. Therefore $H^0(G) = H^0(L)$. Fix $j \geq 1$. As $H^0(G)_j = H^0(L)_j$ we have that
\[
\frac{\widetilde{I^{j+1}M}\cap I^jM}{I^{j+1}M}   = \frac{\widetilde{I^{j+1}M}}{I^{j+1}M}.  
\]
It follows that $ \widetilde{I^{j+1}M} \subseteq I^jM$.
\end{proof}
We now prove a rigidity result for Bockstein cohomology. 
\begin{theorem}\label{rigid}(with hypotheses as above). Assume $\dim M \geq 2$.
If
$BH^1(G) = 0$  then  $BH^0(G) = 0.$
\end{theorem}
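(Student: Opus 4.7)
The plan is to compare the defining short exact sequence $0 \to G \to L_1 \to G(-1) \to 0$ for Bockstein cohomology with the first fundamental exact sequence $0 \to G \to L \to L(-1) \to 0$ (see \ref{I-FES}), and use the hypothesis $BH^1(G) = 0$ to extract information about Ratliff--Rush closures. Proposition \ref{zero-b} will then finish the job.

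First I would exploit the natural inclusion $L_1 \hookrightarrow L$, which fits into a morphism of short exact sequences
\[
\xymatrix{
0 \ar[r] & G \ar[r] \ar@{=}[d] & L_1 \ar[r] \ar[d] & G(-1) \ar[r] \ar[d] & 0 \\
0 \ar[r] & G \ar[r] & L \ar[r] & L(-1) \ar[r] & 0
}
\]
whose right vertical is the shift of the natural inclusion $G \hookrightarrow L$. Writing $\iota_i \colon H^i(G) \to H^i(L)$ for the maps induced by $G \hookrightarrow L$ and $\partial^i \colon H^i(L)(-1) \to H^{i+1}(G)$ for the connecting maps of the bottom row, naturality of the connecting homomorphism yields $\beta^i = \partial^i \circ \iota_i(-1)$, while exactness of the bottom long exact sequence gives $\ker \iota_{i+1} = \image \partial^i$. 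Combining these two facts and using $\beta^1(+1) \circ \beta^0 = 0$ produces the chain
\[
\image \beta^0 \ \subseteq\ \ker \iota_1 \ \subseteq\ \ker \beta^1(+1)
\]
of submodules of $H^1(G)$, so that $\ker \iota_1 / \image \beta^0$ embeds into $BH^1(G)$.

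The hypothesis $BH^1(G) = 0$ then forces $\ker \iota_1 = \image \beta^0$. Combined with $\ker \partial^0 = \image \pi_*$, where $\pi_* \colon H^0(L) \to H^0(L)(-1)$ is induced by $L \to L(-1)$, this unwinds to
\[
H^0(L)(-1) \ =\ \iota_0(H^0(G))(-1)\, +\, \image \pi_*.
\]
Using \ref{zero-lc} to identify $H^0(L)_n = \widetilde{I^{n+1}M}/I^{n+1}M$ (and the analogous degreewise description of $\iota_0$ and $\pi_*$), this identity in degree $n \geq 2$ translates to
\[
\widetilde{I^nM} \ =\ (\widetilde{I^nM} \cap I^{n-1}M)\, +\, \widetilde{I^{n+1}M},
\]
whence $\widetilde{I^nM} \subseteq I^{n-1}M + \widetilde{I^{n+1}M}$. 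Iterating gives $\widetilde{I^nM} \subseteq I^{n-1}M + \widetilde{I^{n+k}M}$ for every $k \geq 0$; Ratliff--Rush stability then forces $\widetilde{I^{n+k}M} = I^{n+k}M \subseteq I^{n-1}M$ for $k \gg 0$, and hence $\widetilde{I^{j+1}M} \subseteq I^j M$ for every $j \geq 1$. Proposition \ref{zero-b} then concludes $BH^0(G) = 0$.

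The hard part will be the careful bookkeeping in the first step: verifying $\beta^i = \partial^i \circ \iota_i(-1)$ on the nose with the right twist, and correctly locating the inclusion chain inside $H^1(G)$ (rather than some shift of it). Once that chain is established, translating the resulting identity in $H^0(L)$ into the ideal-theoretic statement about Ratliff--Rush closures and iterating is routine.
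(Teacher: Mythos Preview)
Your argument is correct and takes a genuinely different route from the paper's. The paper works with the truncations $L_r$ and the identification $\beta^i = \alpha^i_r(r)$ of Proposition \ref{Lr}: after passing to $r$ large enough that $H^0(L_r) = H^0(L)$, it compares the two exact sequences $0 \to L_{r-1} \to L \to L(-r) \to 0$ and $0 \to L_r \to L \to L(-r-1) \to 0$ and finishes with a length count showing $\ell(\image \alpha^0_r) = \ell(H^0(G))$, so that $\alpha^0_r$ (hence $\beta^0$) is injective. You instead bypass the $L_r$ for $r>1$ and work directly with the first fundamental exact sequence: the factorization $\beta^i = \partial^i \circ \iota_i(-1)$ yields the sandwich $\image \beta^0 \subseteq \ker \iota_1 \subseteq \ker \beta^1(+1)$, which collapses under $BH^1(G)=0$ to give $\image \partial^0 = \partial^0(\iota_0(H^0(G))(-1))$, hence $H^0(L)(-1) = \iota_0(H^0(G))(-1) + \image \pi_*$; reading this off in each degree produces the Ratliff--Rush containment and Proposition \ref{zero-b} closes the argument.

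Your route is arguably cleaner: it avoids the choice of large $r$ and the length comparison, and it makes transparent exactly which ideal-theoretic statement $BH^1(G)=0$ forces. The paper's route, on the other hand, establishes injectivity of $\beta^0$ directly without ever unwinding to Ratliff--Rush closures, and stays entirely within the $L_r$-formalism set up in Proposition \ref{Lr}. Your worry about ``bookkeeping with twists'' is well placed but you have it right: the left vertical in your morphism of short exact sequences is the identity on $G$, so naturality of the connecting map gives $\beta^i = \partial^i \circ \iota_i(-1)$ on the nose, and the sandwich lives in $H^1(G)$ with no residual shift. One small point worth making explicit in a final write-up: the iteration step uses that $\widetilde{I^{n+k}M} = I^{n+k}M$ for $k\gg 0$, which holds because $M$ is \CM\ of positive dimension (so $I$ contains an $M$-regular element).
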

\begin{proof}
Set $L = L^I(M)$ and for $r \geq 0$ set $L_r = L^I_r(M)$
Using  equation (\ref{alt-defn-2}) it follows that 
$$\image \alpha^i_r \subseteq \image(H^{i+1}(\rho_r))  =  \ker \delta^{i+1}_r  \subseteq \ker \alpha^{i+1}_{r-1}. $$
As $BH^1(G) = 0$ we have that $\ker \beta^1(1) = \image \beta^0$. It follows that for all $r \geq 0$ we have
$\image \alpha^0_r = \ker \alpha^{1}_{r-1}$. Note that there is no shift in the later equation. So we have $\image \alpha^0_r = \image H^1(\rho_r)$ for all $r \geq 0$. 

As $H^0(L)$ has finite length it follows that $H^0(L_r) = H^0(L)$ for all $r \gg 0$, say from $r \geq c$.
Fix $r \geq c+1$. Note that we have a commutative diagram
\[
  \xymatrix
{
 0
 \ar@{->}[r]
  & L_{r-1}
    \ar@{->}[d]^{i}
\ar@{->}[r]
 & L
    \ar@{->}[d]^{\xi}
\ar@{->}[r]
& L(-r)
    \ar@{->}[d]^{\pi(-r)}
\ar@{->}[r]
 &0
 \\
 0
 \ar@{->}[r]
  & L_{r}
\ar@{->}[r]
 & L
\ar@{->}[r]
& L(-r-1)
\ar@{->}[r]
&0
 }
\]
Here $\xi$ is the identity map and $\ker \pi(-r) = \coker i = G(-r)$. Taking cohomology and as $H^0(L_{r-1}) = H^0(L_r) = H^0(L)$ we have
a diagram
\[
  \xymatrix
{
 0
 \ar@{->}[r]
  &H^0(L )(-r)
    \ar@{->}[d]^{H^0(\pi(-r))}
\ar@{->}[r]
 & H^1(L_{r-1})
    \ar@{->}[d]^{H^1(i)}
\ar@{->}[r]
& H^1(L)
    \ar@{->}[d]^{H^1(\xi)}
 \\
 0
 \ar@{->}[r]
  & H^0(L)(r-1)
\ar@{->}[r]
 & H^1(L_r)
\ar@{->}[r]
& H^1(L)
 }
\]
Note that $\ker H^0(\pi(-r)) = H^0(G)(-r)$.  Note $H^1(\xi)$ is the identity map. Further note that $K = \coker H^1(i) = \image H^1(\rho_r) = \image \alpha^0_r$. Set
$C = \coker H^0(\pi(-r))$.

Note that we have an induced map $\theta \colon C \rt K$. Since $H^1(\xi)$ is the identity map, a simple diagram chase shows that $\theta $ is injective.
Note that $\ell(C) = \ell (H^0(G))$.  However $\ell(K) = \ell(\image \alpha^0_r) \leq \ell(H^0(G))$. It follows that $\ell(\image \alpha^0_r) = \ell(H^0(G))$. It follows that
$\alpha^0_r$ is injective. So $\beta^0$ is injective. Thus $BH^0(G) = 0$.
\end{proof}
We also have the following very general result on the vanishing of Bockstein operator.
\begin{proposition}\label{vanishing}[with hypotheses as above] 
\[
H^i(L) = 0 \implies \beta^i = 0.
\]
\end{proposition}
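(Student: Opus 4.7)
The plan is to factor $\beta^i$ through the connecting homomorphism of the first fundamental exact sequence (\ref{dag})
\[
0 \rightarrow G \rightarrow L \xrightarrow{\Pi} L(-1) \rightarrow 0,
\]
whose source after taking $i$-th local cohomology is $H^i(L)(-1)$; under our hypothesis this module vanishes, and the result follows immediately.

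First I produce a morphism of short exact sequences comparing the Bockstein defining sequence $0 \rightarrow G \rightarrow L_1 \rightarrow G(-1) \rightarrow 0$ with (\ref{dag}), using the identity on the left, the natural inclusion $L_1 \hookrightarrow L$ in the middle, and the shifted inclusion $\iota(-1) \colon G(-1) = L_0(-1) \hookrightarrow L(-1)$ on the right (using $L_0 = G$). Commutativity of the left square is immediate, since $G = L_0 \subseteq L_1 \subseteq L$ are all inclusions of submodules. Commutativity of the right square is a short degree check: on the degree-$n$ component, $(L_1)_n = I^{n-1}M/I^{n+1}M$ sits inside $L_n = M/I^{n+1}M$, and both compositions $L_1 \hookrightarrow L \xrightarrow{\Pi} L(-1)$ and $L_1 \twoheadrightarrow G(-1) \xrightarrow{\iota(-1)} L(-1)$ send this to $I^{n-1}M/I^nM \subseteq M/I^nM = L(-1)_n$ via the natural projection.

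Applying $\R(I)_+$-local cohomology and using naturality of the connecting homomorphism yields the factorization
\[
\beta^i = \partial^i \circ H^i(\iota)(-1),
\]
where $\partial^i \colon H^i(L)(-1) \rightarrow H^{i+1}(G)$ is the connecting map of (\ref{dag}). Under the hypothesis $H^i(L) = 0$, we have $H^i(L)(-1) = 0$, so $\beta^i = 0$.

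There is no serious obstacle here: everything is formal once the comparison diagram is set up, and the only concrete step---the degree-wise commutativity check of the right square---is essentially tautological, relying solely on the module-theoretic identifications already collected in Section \ref{Lprop}.
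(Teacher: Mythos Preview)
Your argument is correct, and in fact somewhat slicker than the paper's. You compare the Bockstein sequence $0 \to G \to L_1 \to G(-1) \to 0$ directly with the first fundamental exact sequence~(\ref{dag}), obtaining the factorization $\beta^i = \partial^i \circ H^i(\iota)(-1)$ through $H^i(L)(-1)$; the vanishing of $H^i(L)$ then kills $\beta^i$ immediately. The paper instead works through the filtration $\{L_r\}$: it compares the sequences $0 \to L_{r-1} \to L \to L(-r) \to 0$ and $0 \to L_r \to L \to L(-r-1) \to 0$, uses $H^i(L)=0$ to show that $H^{i+1}(L_{r-1}) \to H^{i+1}(L_r)$ is injective, whence the connecting map $\delta^i_r$ in~(\ref{alt-defn-1}) vanishes, so $\alpha^i_{r-1}=0$, and then invokes Proposition~\ref{Lr} to conclude $\beta^i=0$. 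Your route is more economical for this particular statement, bypassing both the $L_r$-indexed maps $\alpha^i_r$ and Proposition~\ref{Lr}; the paper's route, on the other hand, is in keeping with its systematic use of the $L_r$ filtration (which is genuinely needed elsewhere, e.g.\ in Theorem~\ref{rigid} and Proposition~\ref{zero-b}).
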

\begin{proof}
For all $r \geq 0$ we  have an exact sequence
\begin{equation}\label{Lr-L-eq}
0 \rightarrow L_r \rightarrow L \rightarrow L(-r-1) \rightarrow 0. 
\end{equation}
Also note that we have a commutative diagram with exact rows
\[
  \xymatrix
{
 0
 \ar@{->}[r]
  & L_{r-1}
    \ar@{->}[d]^{j}
\ar@{->}[r]
 & L
    \ar@{->}[d]^{\xi}
\ar@{->}[r]
& L(-r)
    \ar@{->}[d]^{\pi(-r)}
\ar@{->}[r]
 &0
 \\
 0
 \ar@{->}[r]
  & L_{r}
\ar@{->}[r]
 & L
\ar@{->}[r]
& L(-r-1)
\ar@{->}[r]
&0
 }
\]
Here $\xi$ is the identity map and $\ker \pi(-r) = \coker j = G(-r)$. So we have a commutative diagram
\[
\xymatrix
{
\ 
 \ar@{->}[r]
  & H^{i}(L)
    \ar@{->}[d]^{H^{i}(\pi(-r))}
\ar@{->}[r]
 & H^{i+1}(L_{r-1})
    \ar@{->}[d]^{H^{i+1}(j)}
\ar@{->}[r]
& H^{i+1}(L)
    \ar@{->}[d]^{H^{i+1}(\xi)}
\ar@{->}[r]
 & \
 \\
 \
 \ar@{->}[r]
  & H^{i}(L)
\ar@{->}[r]
 &  H^{i+1}(L_{r})
\ar@{->}[r]
& H^{i+1}(L)
\ar@{->}[r]
& \ 
 }
\]
If $H^i(L)$ is zero then as $H^{i+1}(\xi)$ is the identity map it follows that  the natural map $H^{i+1}(j) \colon H^{i+1}(L_{r-1}) \rt H^{i+1}(L_r)$ is an inclusion. So  $\delta^i_{r}$ in \ref{alt-defn-1} is the zero map. Therefore $\alpha^i_{r-1} = 0$. Thus $\beta^i = 0$.
\end{proof}
\begin{example}
Let $(A,\m)$ be local and let $N$ be a $2$-dimensional \CM \ $A$-module. Let $I$ be an ideal of definition for $N$. Assume that any one of the following conditions hold
\begin{enumerate}
\item
$e_2^I(N) = 0$.
\item
$N = A$, the ideal $I$ is an integrally closed   with $e_2^I(A) = e_1^I(A)- e_0^I(A) + \ell(A/I).$
\end{enumerate}
Then $BH^i(G_I(N)) = 0$ for $i = 0, 1$.

By \cite[4.3]{Pu6} and \cite[4.4]{Pu6} we have that $\widetilde{I^{i+1}N} \subseteq I^iN$ for all $i \geq 1$. It follows that $BH^0(G_I(N)) = 0$, i.e., $\beta^0$ is injective.

By  \cite[4.5]{Pu6} $H^1(L^I(N)) = 0$. It follows that $\beta^1 = 0$. Since $H^0(G_I(N) \cong H^0(L^I(M))$ it follows from \ref{dag} that $H^1(G_I(N)) \cong H^0(G_I(N)(-1)$. Therefore $\beta^0$ is an isomorphism. It follows that 
$BH^1(G_I(N)) = 0$.
\end{example}

We now give an example of a one dimensional \CM \ local ring $A$ and an $\m$-primary ideal with  $BH^0(G_I(A)) \neq 0$.
The  example is from \cite[1.18]{HLS}.
\begin{example}\label{HLS-ex}
Let $A = k[[ t^5, t^6]]$. Let $I = (t^{10}, t^{11})$. Then note that $t^{24} \notin I$.
However it can be easily verified that $t^{24} \in (I^4 \colon I^2) \subseteq \widetilde{I^2}$. Thus we have $\widetilde{I^2} \nsubseteq I$. So by \ref{zero-b} we have that $BH^0(G_I(A)) \neq 0$.
\end{example}

\begin{remark}\label{zeroTOn}
Let $(A,\m)$ be a one dimensional \CM \ local ring and let $I$ be an $\m$-primary ideal with $BH^0(G_I(A)) \neq 0$. Consider $B = A[X_1,\cdots,X_n]_{\n}$ where 
$\n = (\m, X_1,\ldots, X_n)$. Set $J = (I, X_1,\ldots,X_n)$. Clearly $B$ is a \CM \ local ring of dimension $n+1$ and $J$ is a $\n B$-primary ideal. In \ref{vashi2} we prove that
$BH^n(G_J(B)) \neq 0$.
\end{remark}

\begin{remark}
Example \ref{HLS-ex} is rather simple. However the author does not know of an example of a monomial ideal $I$ in $k[X_1,\ldots,X_n]$ (here $2 \leq n \leq 4$) with $\widetilde{I^2} \nsubseteq I$.
\end{remark}
\section{Bockstein operators modulo a super regular  element \\ and Sally Descent for Bockstein Cohomology}
Let $M$ be a \CM \ $A$-module of dimension $r \geq 2$ and let $I$ be an ideal of definition for $M$. Let $x \in I\setminus I^2$ be such that $x^*$ is $G_I(M)$ regular. Here $x^*$ is the image of $x$ in $I/I^2$. Set
$N = M/xM$. In this section we relate the Bockstein operators of $N$ and $M$. This will be used in the later sections. We also prove an analogue for Sally descent for Bockstein
cohomology.

\s \label{setup}  Set $G = G_I(M), \ov{G} = G_I(N), \R = \R(I,M)$ and $\ov{\R} = \R(I,N)$. Set $u = xt \in \R(I)_1$. Note that $u$ is $\R$-regular and $\R/(u) = \ov{\R}$.  Furthermore notice that
the action of $u$ on $G$ is same as that of $x^*$. It follows that $t^{-1},u$ is a $\R$-regular sequence. So  $t^{-2}, u$ is also a $\R$-regular sequence, cf., \cite[16.1]{Mat}.  Therefore we get a commutative diagram
\[
  \xymatrix
{
\
&0
\ar@{->}[d]
&0
\ar@{->}[d]
&0
\ar@{->}[d]
\
\\
 0
 \ar@{->}[r]
  & G(-1)
\ar@{->}[r]^{x^*}
\ar@{->}[d]
 & G
\ar@{->}[r]
\ar@{->}[d]
& \ov{G}
\ar@{->}[r]
\ar@{->}[d]
&0
\\
 0
 \ar@{->}[r]
  &\R/t^{-2}\R(-1)
    \ar@{->}[d]
\ar@{->}[r]^{u}
 & \R/t^{-2}\R
    \ar@{->}[d]
\ar@{->}[r]
& \ov{\R}/t^{-2}\ov{R}
    \ar@{->}[d]
    \ar@{->}[r]
    &0
 \\
 0
 \ar@{->}[r]
  & G(-2)
\ar@{->}[r]^{x^*}
\ar@{->}[d]
 & G(-1)
\ar@{->}[r]
\ar@{->}[d]
& \ov{G}(-1)
\ar@{->}[r]
\ar@{->}[d]
&0
\\
\
&0
&0
&0
\
 }
\]
As a corollary we obtain
\s \label{mod}(with hypotheses as in \ref{setup}) We have a commutative diagram
 \[
  \xymatrix
{
  &H^i(\ov{G})(-1)
    \ar@{->}[d]^{\ov{\beta^i}}
\ar@{->}[r]
 & H^{i+1}(G)(-2)
    \ar@{->}[d]^{\beta^{i+1}(-1)}
\ar@{->}[r]^{x^*}
& H^{i+1}(G)(-1)
    \ar@{->}[d]^{\beta^{i+1}}
 \\
  & H^{i+1}(\ov{G})
\ar@{->}[r]
 & H^{i+2}(G)(-1)
\ar@{->}[r]^{x^*}
& H^{i+2}(G))
 }
\]

Sally descent is a basic technique in our area.  Let $x \in I$ be $M$-superficial \wrt\ $I$. Then Sally descent says that
$$\depth G_{I}(M/xM) \geq r \geq 1 \iff \depth G_I(M) \geq r+1. $$
We now prove a version of Sally descent for Bockstein cohomology modules.  Unfortunately the hypothesis is more restrictive. However in section 7
we will use this result. We will use Matlis duality in the proof of the theorem. So for convenience we take
$I$ to be $\m$-primary and not just an ideal of definition for $M$. Also let $(-)^\vee$ denote the Matlis dual of a $G_I(A)$-module.

\begin{theorem}\label{Sally-descent}
Let $M$ be a \CM \ $A$-module and let $I$ be an $\m$-primary ideal.
Let $x \in I\setminus I^2$ be such that $x^*$ is $G_I(M)$ regular. Also assume that for $i =1,\ldots,r$ either $H^i(G_I(M))$ is zero or $x^*$ is $H^i(G_I(M))^\vee$ regular. Let $r< \dim M$.
Then
$$BH^i\left(G_{I}(M/xM)\right) = 0 \ \text{for} \ i \leq r-1 \iff BH^i(G_I(M)) = 0  \ \text{for} \ i \leq r. $$
\end{theorem}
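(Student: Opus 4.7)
The plan is to combine the commutative diagram in \ref{mod} with the long exact sequence of local cohomology coming from
$$0 \to G(-1) \xrightarrow{x^*} G \to \ov{G} \to 0,$$
where $G = G_I(M)$ and $\ov{G} = G_I(M/xM)$, to produce a long exact sequence relating the Bockstein cohomology modules of $G$ and $\ov{G}$, from which both implications will follow.

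First I would translate the hypothesis. By graded Matlis duality, the condition that $x^*$ is regular on $H^i(G)^\vee$ is equivalent to $x^* \colon H^i(G)(-1) \to H^i(G)$ being surjective. Combined with $x^*$ being $G$-regular (which forces $x^* \colon H^0(G)(-1) \to H^0(G)$ to be injective by left-exactness), the long exact sequence of local cohomology collapses, for $1 \leq i \leq r-1$, into the isomorphism $H^i(\ov{G}) \cong K^{i+1} := \ker(x^* \colon H^{i+1}(G)(-1) \to H^{i+1}(G))$ given by the connecting map $\delta^i$. The diagram in \ref{mod} then identifies $\ov{\beta^i}$, under this isomorphism, with the restriction of $\beta^{i+1}(-1)$ to $K^{i+1}$; equivalently, the Bockstein complex of $\ov{G}$ is, up to a grading shift, the kernel subcomplex of the Bockstein complex of $G$.

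Since $\beta^{\bullet}$ is $\R(I)$-linear (as it arises as a connecting map of $\R(I)$-modules), multiplication by $x^* \in \R(I)_1$ commutes with $\beta^{\bullet}$, giving a short exact sequence of cochain complexes in positions $i \geq 1$
$$0 \to K^\bullet \to H^\bullet(G)(-1) \xrightarrow{x^*} H^\bullet(G) \to 0,$$
whose long exact sequence of cohomology reads (up to grading shifts)
$$\cdots \to BH^{j-1}(\ov{G}) \to BH^j(G)(-1) \xrightarrow{x^*} BH^j(G) \to BH^j(\ov{G}) \to \cdots$$
for $j$ in an appropriate range. For the direction $(\Leftarrow)$, vanishing of $BH^j(G)$ for $j \leq r$ immediately forces vanishing of $BH^j(\ov{G})$ for $j \leq r-1$. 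For the direction $(\Rightarrow)$, vanishing of $BH^j(\ov{G})$ for $j \leq r-1$ makes $x^* \colon BH^j(G)(-1) \to BH^j(G)$ injective for $1 \leq j \leq r$; since $BH^j(G)$ is a subquotient of the *-Artinian module $H^j(G)$, it is itself *-Artinian, so \ref{Artin-vanish}(a) yields $BH^j(G) = 0$ for $1 \leq j \leq r$. Finally, since $r \geq 1$ implies $\dim M \geq 2$, the rigidity Theorem \ref{rigid} gives $BH^0(G) = 0$ from $BH^1(G) = 0$.

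The main technical obstacle is the bookkeeping around the boundary index $i = 0$: here $x^*$ is not surjective on $H^0(G)$ in general, so the identification $H^0(\ov{G}) \cong K^1$ fails (one has instead only the short exact sequence $0 \to H^0(G)/x^* H^0(G)(-1) \to H^0(\ov{G}) \to K^1 \to 0$). Consequently the short exact sequence of Bockstein complexes above is valid only in positions $i \geq 1$, the resulting long exact sequence controls $BH^j(G)$ directly only for $j \geq 1$, and the vanishing of $BH^0(G)$ in the $(\Rightarrow)$ direction must be obtained separately by invoking the rigidity result from Theorem \ref{rigid}.
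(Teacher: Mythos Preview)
Your approach is correct and takes a genuinely different route from the paper's. The paper proceeds by induction on $m$ from $1$ to $r$: at each step it applies the Snake Lemma to a diagram built from \ref{mod}, with the top row replaced by the quotients $H^{m}(\ov G)/\ov{\beta^{m-1}}(\cdots)$ and $H^{m+1}(G)/\beta^m(\cdots)$ produced at the previous step. The kernel sequence gives the equivalence $BH^m(G)=0 \Leftrightarrow BH^{m-1}(\ov G)=0$, and the cokernel sequence furnishes the input for the next step. Your argument instead packages everything into a single short exact sequence of Bockstein complexes and reads off one long exact sequence in cohomology; this is more conceptual and avoids the step-by-step bookkeeping, while the paper's version has the side benefit of producing the explicit exact sequences (2) at each level.

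Two corrections to your write-up. First, your ``main technical obstacle'' at $i=0$ is a non-issue: since $x^*\in G_+$ is $G$-regular, every element of $H^0_{G_+}(G)$ is annihilated by a power of $x^*$ while $x^*$ is a nonzerodivisor on $G$, so $H^0(G)=0$. Hence $x^*$ is trivially surjective on $H^0(G)$, the identification $H^0(\ov G)\cong K^1$ \emph{does} hold, $BH^0(G)=0$ immediately, and the appeal to Theorem~\ref{rigid} is unnecessary (the paper simply notes $H^0(G)=0$ at the outset). Second, the genuine boundary issue is at the \emph{top} of the range, not the bottom: surjectivity of $x^*$ on $H^i(G)$ is assumed only for $i\le r$, so your short exact sequence of complexes holds only in positions $0\le i\le r$, and the identification $H^{i+1}(K^\bullet)\cong BH^i(\ov G)$ is valid only for $i\le r-1$. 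You should verify that the segments of the long exact sequence you actually use---$BH^{j-1}(\ov G)\to BH^j(G)(-1)\xrightarrow{x^*} BH^j(G)$ for $(\Rightarrow)$ with $1\le j\le r$, and $BH^j(G)\to BH^j(\ov G)\to BH^{j+1}(G)(-1)$ for $(\Leftarrow)$ with $0\le j\le r-1$---fall within this range. They do, so the argument goes through, but this deserves an explicit sentence rather than being left as ``an appropriate range''.
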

\begin{proof}
Set $G = G_I(M)$ and $\ov{G} = G_I(M/xM)$. As $H^0(G) = 0$ we get that $BH^0(G) = 0$. 

We prove by induction on $m$ with $1 \leq m \leq r$ that
\begin{enumerate}
\item
$BH^m(G) = 0$ $\iff BH^{m-1}(\ov{G}) = 0$.
\item
We have an exact sequence
$$0 \rt \frac{H^{m}(\ov{G})}{\ov{\beta^{m-1}}\left(H^{m-1}(\ov{G})\right)}  \rt \frac{H^{m+1}(G)}{\beta^{m}\left(H^m(G)\right)}(-1) \xrightarrow{x^*} \frac{H^{m+1}(G)}{\beta^{m}\left(H^m(G)\right)}. $$
Furthermore the multiplication by $x^*$ is surjective if $m <r$.
\end{enumerate}
We first prove the result for $m = 1$. Note by \ref{mod} we have an commutative diagram
\[
  \xymatrix
{    
0
 \ar@{->}[r] 
  &H^0(\ov{G})(-1)
    \ar@{->}[d]^{\ov{\beta^0}}
\ar@{->}[r]
 & H^{1}(G)(-2)
    \ar@{->}[d]^{\beta^{1}(-1)}
\ar@{->}[r]^{x^*}
& H^{1}(G)(-1)
    \ar@{->}[d]^{\beta^{1}}
    \ar@{->}[r]  
  & 0 
 \\
 \
 \ar@{->}[r]  
  & H^{1}(\ov{G})
\ar@{->}[r]^{\delta^1}
 & H^{2}(G)(-1)
\ar@{->}[r]^{x^*}
& H^{2}(G)
& \ 
 }
\]
Note that multiplication by $x^*$ on $H^1(G)$ is surjective since $x^*$ is $H^1(G)^\vee$-regular. It also follows that
$\delta^1$ is injective. Thus the diagram above satisfies the hypotheses of Snake Lemma.
By Snake Lemma and as $\beta^0 = 0$ we have an exact sequence
\[
0 \rt BH^0(\ov{G}) \rt BH^1(G)(-1) \rt BH^1(G).
\]
If $BH^0(\ov{G}) = 0$ then we have an inclusion $BH^1(G)(-1) \rt BH^1(G)$. As $H^1(G)$ is $*$-Artinian $G_I(A)$-module we have that its subquotient $BH^1(G)$ is also $*$-Artinian. 
By \ref{Artin-vanish}  it follows that $BH^1(G) = 0$. Conversely if $BH^1(G) = 0$ then by the above exact sequence we get $BH^0(\ov{G}) = 0$.

We now assume $BH^1(G) = 0$.
By Snake Lemma  we have an exact sequence
\[
0 \rt \frac{H^{1}(\ov{G})}{\ov{\beta^{0}}\left(H^{0}(\ov{G})\right)}  \rt \frac{H^{2}(G)}{\beta^{1}\left(H^1(G)\right)}(-1) \xrightarrow{x^*} \frac{H^{2}(G)}{\beta^{1}\left(H^1(G)\right)}. 
\]
Note that if $2 \leq r$ then multiplication by $x^*$ on $H^2(G)$ is surjective since $x^*$ is $H^2(G)^\vee$-regular.
It again follows by the Snake Lemma that the map 
$$\frac{H^{2}(G)}{\beta^{1}\left(H^1(G)\right)}(-1) \xrightarrow{x^*} \frac{H^{2}(G)}{\beta^{1}\left(H^1(G)\right)}$$
is surjective.

Thus we have proved the assertion for $m = 1$. Assume the result for $m = i$ and we prove the  result for $m = i+1$ (if $i < r$). Note as $\beta^j \circ \beta^{j-1} = 0$, by \ref{mod} and the assertion (2) of our inductive hypotheses we have a commutative diagram
\[
  \xymatrix
{    
0
 \ar@{->}[r] 
  & \frac{H^{i}(\ov{G})}{\ov{\beta^{i-1}}\left(H^{i-1}(\ov{G})\right)}(-1) 
    \ar@{->}[d]^{\ov{\beta^i}}
\ar@{->}[r]
 & \frac{H^{i+1}(G)}{\beta^{i}\left(H^i(G)\right)}(-2)
    \ar@{->}[d]^{\beta^{i+1}(-1)}
\ar@{->}[r]^{x^*}
& \frac{H^{i+1}(G)}{\beta^{i}\left(H^i(G)\right)}(-1)
    \ar@{->}[d]^{\beta^{i+1}}
    \ar@{->}[r]  
  & 0 
 \\
 \
 \ar@{->}[r]  
  & H^{i+1}(\ov{G})
\ar@{->}[r]^{\delta^{i+1}}
 & H^{i+2}(G)(-1)
\ar@{->}[r]^{x^*}
& H^{i+2}(G)
& \ 
 }
\]
As $x^*$ is $H^{i+1}(G)^\vee$-regular we get that multiplication by $x^*$ on $H^{i+1}(G)$ is surjective. It follows that $\delta^{i+1}$ is injective. Thus we can apply the Snake Lemma again. By an argument similar to the case $m =1$ we can prove the assertion for $m = i+1$. 
\end{proof}
We now give an example where the hypotheses of our Theorem on Sally descent is satisfied.
\begin{example}\label{Variable}
Let $(A,\m)$ be a \CM \ local ring of dimension $d \geq 1$ and let $J$ be an $\m$-primary ideal. Let $B = A[X]_\n$ where $\n = (\m, X)$. Set $I = (J, X)$. Then $J$ is $\n B$-primary. Furthermore $G_I(B) \cong G_J(A)[X^*]$. It can be easily verified that in this case $H^i(G_I(B))^\vee \cong H^{i-1}(G_J(A))^\vee[X^*](-1)$ for $i \geq 1$. Thus in this case the hypotheses of Theorem \ref{Sally-descent} are satisfied. 
\end{example}
We give two applications of the above example.
\begin{example}(with hypotheses as in \ref{Variable}).
If $BH^i(G_J(A)) = 0 $ for $i \leq r-1$ then $BH^i(G_I(B)) = 0 $ for $i \leq r$.
\end{example}

\begin{example} \label{vashi}((with hypotheses as in \ref{Variable}).
If $BH^i(G_J(A)) = 0 $ for $i \leq r-1$ and $BH^r(G_J(A)) \neq 0$ then $BH^i(G_I(B)) = 0 $ for $i \leq r$ and $BH^{r+1}(G_I(B)) \neq 0$.
\end{example}

\begin{remark}\label{vashi2}
The assertion in Remark \ref{zeroTOn} follows from \ref{vashi} and an easy induction on the number of variables attached.
\end{remark}
\section{ Asymptotic Bockstein cohomology }

It is well known that for all $n \gg 0$ we have $\depth G_{I^n}(A)  \geq 1$. However there are examples where
$H^1(G_{I^n}(A))$ has infinite length for all $n \geq 1$. Thus  it is possible to have $\depth G_{I^n}(A)  = 1$ for all $n \gg 0$.

For Bockstein Cohomolgy we have the following result:
\begin{theorem}\label{asymp}
Let $(A,\m)$ be a local ring  and let $M$ be a \CM \ $A$-module of dimension $d \geq 2$. Let $I$ be an ideal of definition for $M$.
Then $BH^1(G_{I^n}(M)) = 0$ for all $n \gg 0$.
\end{theorem}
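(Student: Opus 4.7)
The plan is to first apply the Veronese trick from \ref{Veronese} to pass to $I^n$ for $n$ sufficiently large, so that $H^0(L^{I^n}(M))=0$ and $H^1(L^{I^n}(M))_k=0$ for all $k\geq 0$. From the first fundamental exact sequence (\ref{dag}) this forces $H^0(G_{I^n}(M))=0$, so $\beta^0$ has trivial source and
\[
BH^1(G_{I^n}(M))=\ker \beta^1(+1).
\]
Thus the task reduces to proving that $\beta^1\colon H^1(G_{I^n}(M))(-1)\to H^2(G_{I^n}(M))$ is injective for $n\gg 0$.

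To reformulate injectivity, I would apply the long exact cohomology sequence of $0\to G\to L_1\to G(-1)\to 0$ (with $G=G_{I^n}(M)$, $L_1=L_1^{I^n}(M)$), noting that $H^0(L_1)=0$ (which follows from $L_1\hookrightarrow L$ and $H^0(L)=0$). This yields $\ker \beta^1\cong H^1(L_1)/H^1(G)$. Next, cohomology of $0\to L_1\to L\to L(-2)\to 0$ together with (\ref{dag}) identifies $H^1(L_1)=\ker((\Pi^*)^2)$ and $H^1(G)=\ker\Pi^*$ as submodules of $H^1(L)$, where $\Pi^*\colon H^1(L)\to H^1(L)(-1)$ is induced by $\Pi\colon L\to L(-1)$ (equivalently, multiplication by $t^{-1}\in\R(I)$). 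Hence the task becomes: show $\ker\Pi^*=\ker(\Pi^*)^2$ on $H^1(L^{I^n}(M))$ for $n\gg 0$.

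The central device is the Veronese identification of \ref{Veronese}: the operator $\Pi^*$ on $H^1(L^{I^n}(M))$ pulls back to the $n$-fold iterate $(\Pi^*)^n$ on $H^1(L^I(M))$, restricted to the Veronese-indexed graded pieces $H^1(L^I(M))_{n(m+1)-1}$ for $m\in\Z$. Three inputs come into play: (a) $H^1(L^I(M))=\varinjlim H^1(L^I_r(M))$ is entirely $\Pi^*$-torsion since each $H^1(L^I_r(M))$ is torsion; (b) each graded piece of $H^1(L^I(M))$ has finite length by \ref{Artin}(c); and (c) the successive layers $F_j/F_{j-1}$ of the $\Pi^*$-kernel filtration $F_j=\ker(\Pi^*)^j$ on $H^1(L^I(M))$ inject in each degree $k$ into $H^1(G_I(M))_{k-j+1}$ by iterated application of $\Pi^*$. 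A direct degree count then reduces the required equality $\ker(\Pi^*)^n=\ker(\Pi^*)^{2n}$ on the Veronese-relevant pieces to the vanishing of $H^1(G_I(M))_l$ for $l\leq -n-1$.

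The \textbf{main obstacle} is establishing that $H^1(G_I(M))$ is bounded below, which guarantees the above vanishing once $n$ is taken sufficiently large. This is not automatic from the general theory (local cohomology of finitely generated graded modules need not be bounded below). My approach would be induction on $d=\dim M\geq 2$: the base case $d=2$ can be handled directly by exploiting $\dim G_I(M)=2$ and the constraints this places on $H^1$ via (\ref{dag}); for the inductive step $d\geq 3$, a superficial element $x\in I$ together with the second fundamental exact sequence (\ref{dagg}) relates $H^i(L^I(M))$ and $H^i(L^I(M/xM))$, and combined with the Sally-descent machinery of Theorem~\ref{Sally-descent} and the inductive hypothesis one would close the induction.
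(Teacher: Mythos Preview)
Your reformulation through $\ker\beta^1 \cong \ker(\Pi^*)^2/\ker\Pi^*$ inside $H^1(L^{I^n}(M))$ is correct, as is the Veronese reduction to iterates of $\Pi^*$ on $H^1(L^I(M))$. The gap is that the sufficient condition you isolate --- that $H^1(G_I(M))_l=0$ for $l\le -n-1$, i.e.\ that $H^1(G_I(M))$ is bounded below --- is \emph{false} precisely when the theorem has content, so no induction can establish it. For a concrete obstruction, take $\dim M=3$ with $e_2^I(M)=0$ and asymptotic depth~$1$ (such examples exist; cf.\ the discussion around Theorem~\ref{e2}). With $x$ superficial and $N=M/xM$ one has $H^1(L^I(N))=0$, and (\ref{longH}) shows that $u=xt$ gives isomorphisms $H^1(L^I(M))_{n-1}\to H^1(L^I(M))_n$ for all $n<0$. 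Identifying all negative graded pieces with $V:=H^1(L^I(M))_{-1}\neq 0$ via powers of $u$, the operator $\Pi^*$ (multiplication by $t^{-1}$) corresponds to multiplication by $x\in\m$ on $V$, since $u\cdot t^{-1}=x$ in $\R(I)$. As $V$ has finite length, $x$ acts non-injectively, whence $H^1(G_I(M))_n=\ker\Pi^*_n\cong(0:_V x)\neq 0$ for \emph{every} $n<0$. Your proposed inductive step via Theorem~\ref{Sally-descent} is also off target: that result concerns vanishing of Bockstein cohomology rather than boundedness of $H^1(G)$, and it requires $x^*$ to be regular on $G_I(M)$ and on each $H^i(G_I(M))^\vee$, hypotheses not available in general.

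The paper's argument avoids any such global vanishing. After passing to $K=I^n$ it picks $x$ superficial for $K$ and proves injectivity of $\beta^1$ degree by degree: first $\beta^1_0\colon H^1(G)_{-1}\to H^2(G)_0$ is shown injective by identifying it, via the commutative square of \ref{mod}, with $\ov{\beta}^0_1$ on $\ov{G}=G_K(M/xM)$ (injective because $H^0(\ov{L_1})_1\subseteq H^0(\ov{L})_1=0$); then, using that $H^0(\ov{G})_n=0$ for $n<0$ forces $x^*\colon H^1(G)_{n-1}\hookrightarrow H^1(G)_n$ for $n<0$, the same square and downward induction on $m$ yield injectivity of $\beta^1_{-m}$ for all $m\ge 0$.
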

\begin{proof}
We may assume that residue field of $A$ is infinite and that $I$ is $\m$-primary.
Choose $n \gg 0$ such that 
$$H^0(L^{I^n}(M)) = 0 \quad \text{and} \quad  H^1(L^{I^n}(M))_j = 0 \ \text{for all} \ j \geq 0; \ \text{see \ref{Veronese}}.$$
We may also assume that $H^2(G_{I^n}(A))_j = 0$ for all $j \geq 1$, see \cite[18.3.13]{BSh}.
Set $K = I^n$. Let $x$ be a $M$-superficial with respect to $K$.  Let $N = M/xM$. Set  
$$G = G_K(M), \  L = L^K(M), \ \ov{G} = G_K(N) \ \text{and} \ \ov{L} = L^K(N).$$
By \ref{dag} we have an exact sequence
\[
0 \rt H^1(G)_j \rt H^1(L)_j.
\]
So $H^1(G)_j = 0$ for $j \geq 0$ and $\ell(H^0(G)_{-1}) \leq \ell( H^0(L)_{-1})$.
By \ref{longH} we have an exact sequence
\[
0 \rt H^0(\ov{L})_n \rt H^1(L)_{n-1} \rt H^1(L)_n.
\]
So $H^0(\ov{L})_n = 0$ for $n > 0$ and
\[
H^1(L)_{-1} = H^0(\ov{L})_0  = \frac{\widetilde{KN}}{KN}.
\]
We also have an exact sequence 
\[
0 \rt H^0(\ov{G})_n \rt H^1(G)_{n-1} \xrightarrow{x^*} H^1(G)_n  \rt H^1(\ov{G})_n \rt H^2(G)_{n-1} \xrightarrow{x^*} H^2(G)_n.
\]
so we obtain 
\[
H^1(G)_{-1} \cong H^0(\ov{G})_0  = \frac{\widetilde{KN}}{KN} \quad \text{and}  \  H^1(\ov{G})_{1} \cong H^2(G)_0.
\]
Furthermore the map
\begin{equation*}
  H^1(G)_{n-1} \xrightarrow{x^*} H^1(G)_n \ \ \  \text{ is injective for} \ \  n < 0. \tag{$\dagger$} 
\end{equation*}
We now consider the defining exact sequence for Bockstein cohomology of $\ov{G}$. Set $\ov{L_1} = L_1^K(N)$.
Note we have an exact sequence
\[
o \rt \ov{G} \rt \ov{L_1} \rt  \ov {G}(-1) \rt 0
\]
Since $H^0(\ov{L_1})_1 \subseteq H^0(\ov{L})_1 = 0$ we have an inclusion
\[
\ov{\beta}^0_1 \colon H^0(G(-1))_1 = H^0(G)_0 \rt H^1(\ov{G})_1.
\]
By \ref{mod} we have a commutative diagram
\[
\xymatrix
{
& H^{0}(\ov{G})_0
\ar@{->}[r]
\ar@{->}[d]^{\ov{\beta}^0_1}
& H^{1}(G)_{-1}
\ar@{->}[d]^{\beta^1_0}
\\
& H^{1}(\ov{G})_1
\ar@{->}[r]
& H^{2}(G)_0
}
\]
Note the horizontal maps are  isomorphisms. Since $\ov{\beta}^0_1$ is an inclusion we have that $\beta^1_0$ is an inclusion.

We now  prove that $\beta^1 \colon H^1(G)(-1) \rt  H^2(G)$ is injective.  It suffices to prove that the graded components of $\beta^1$, i.e., the
maps $\beta^1_n \colon H^1(G)_{n-1} \rt H^2(G)_n$ is injective for all $n \in \Z$. As $H^1(G)_n = 0$ for $n \geq  0$ we have trivially that $\ker \beta^1_n = 0$ for $n \geq 1$.
  By induction on $m  = -n$ we prove that $\ker \beta^1_{-m} =  0$ for all $m \geq 0$.   By the above argument we have that $\ker \beta^1_0 = 0$.
  Assume that $m > 0$ and that $\beta^1_{-m+1}$ is injective.  By \ref{mod} we have a commutative diagram
  \[
\xymatrix
{
& H^{1}(G)_{-m-1}
\ar@{->}[r]^{x^*}
\ar@{->}[d]^{\beta^1_{-m}}
& H^{1}(G)_{-m}
\ar@{->}[d]^{\beta^1_{-m+1}}
\\
& H^{2}(G)_{-m}
\ar@{->}[r]^{x^*}
& H^{2}(G)_{-m+1}
}
\]
By ($\dagger$)   the top row is injective. As $\beta^1_{-m+1}$ is injective we have that $\beta^1_{-m}$ is an inclusion. Thus by induction we have shown that $\ker \beta^1_{-m} = 0$ for all
$m \geq 0$. It follows that $\ker \beta^1 = 0$. Thus $BH^1(G_K(M)) = 0$.
 \end{proof}

\textit{Conjecture} (with assumptions as in Theorem)
$$\ell\left(BH^1(G_I(M))\right) < \infty. $$

\section{$e_2^I(M) = 0$ or $I$ is an integrally closed ideal \\ with $e_2^I(A) = e_1^I(A)- e_0^I(A) + \ell(A/I).$}
In this section we prove the following theorem:
\begin{theorem}\label{e2}
Let $(A,\m)$ be a local ring and let $M$ be a $3$-dimensional \CM \ $A$-module. Let $I$ be an ideal of definition for $M$. Assume that any one of the following conditions hold
\begin{enumerate}[\rm (1)]
\item
$e_2^I(M) = 0$.
\item
$M = A$, the ideal $I$ is an integrally closed   with $e_2^I(A) = e_1^I(A)- e_0^I(A) + \ell(A/I).$
\end{enumerate}
Then for all $n \gg 0$ we have $BH^i(G_{I^n}(M)) = 0$ for $i <3$.
\end{theorem}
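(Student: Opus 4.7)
The plan is to dispose of $BH^0$, $BH^1$, $BH^2$ in order. Since $\dim M = 3 \geq 2$, Theorem \ref{asymp} gives $BH^1(G_{I^n}(M)) = 0$ for all $n \gg 0$, and the rigidity result of Theorem \ref{rigid} then forces $BH^0(G_{I^n}(M)) = 0$. The real content is therefore the vanishing of $BH^2(G_{I^n}(M))$; since $\image \beta^1 \subseteq \ker \beta^2(+1)$ automatically, it suffices to prove that
\[
\beta^2 \colon H^2(G_{I^n}(M))(-1) \longrightarrow H^3(G_{I^n}(M))
\]
is injective for $n \gg 0$.

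To establish this injectivity I would imitate the proof of Theorem \ref{asymp} one cohomological degree higher, using the $2$-dimensional Example as the base case rather than a trivial base. After passing to a faithfully flat extension we may assume the residue field is infinite. Set $K = I^n$ for $n \gg 0$, so that by \ref{Veronese} one has $H^i(L^K(M))_j = 0$ for $j \geq 0$ and all $i \leq 2$. Pick $x \in I$ that is $M$-superficial with respect to $I$; then $x^*$ is $G_K(M)$-regular and $N := M/xM$ is a $2$-dimensional \CM\ module. Under hypothesis (1), $e_2^K(N) = e_2^K(M) = 0$. Under hypothesis (2) the condition does not descend verbatim, but the actual inputs used in the $2$-dimensional Example --- namely $\widetilde{K^{j+1}N} \subseteq K^j N$ for $j \geq 1$ and $H^1(L^K(N)) = 0$, from \cite[4.3--4.5]{Pu6} --- can be verified directly for $N$ by a superficial-reduction argument. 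Either way the $2$-dimensional Example applies to $N$ and yields $BH^0(\ov G) = BH^1(\ov G) = 0$, where $\ov G = G_K(N)$; concretely, $\ov{\beta^0}$ is an isomorphism and $\ov{\beta^1} = 0$.

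With these ingredients the mod-$x^*$ diagram \ref{mod} in the row $i = 1$ relates $\beta^2$ on $G = G_K(M)$ to $\ov{\beta^1}$ on $\ov G$ through a Snake-Lemma ladder. The Veronese vanishing forces $H^2(G)_j = 0$ for $j \geq 1$, so $\ker \beta^2_j = 0$ automatically for $j \geq 2$. A diagram chase in the degree-$1$ piece, using $\ov{\beta^1} = 0$ together with the isomorphism $\ov{\beta^0}$, takes care of $\ker \beta^2_1$. Injectivity is then propagated into all negative degrees by descending induction, exactly parallel to the step $(\dagger)$ of the proof of Theorem \ref{asymp}.

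The hard part will be verifying the analogue of $(\dagger)$, namely that multiplication by $x^*$ is injective on $H^2(G_K(M))$ in negative degrees. In Theorem \ref{asymp} the corresponding fact for $H^1$ fell out immediately from $H^0(\ov G)_n = 0$ for $n < 0$ via the second fundamental sequence; here, controlling the kernel of $x^*$ on $H^2(G)$ requires finer information about $H^1(\ov G)$ in non-positive degrees, which should in turn follow from the $2$-dimensional Example combined with the long exact sequence \ref{longH}. Once this is secured, the induction closes and $\beta^2$ is injective in every degree, yielding $BH^2(G_{I^n}(M)) = 0$ for $n \gg 0$.
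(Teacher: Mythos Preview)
Your reduction to showing $BH^2(G_{I^n}(M))=0$ is fine, and your opening observation that ``$\beta^2$ injective $\Rightarrow BH^2=0$'' is logically correct. The problem is that under the hypotheses of Theorem~\ref{e2} the map $\beta^2$ is \emph{never} injective in the interesting cases, so your whole strategy aims at a false target. Concretely: once you know $H^1(L^I(N))=0$ (this is exactly what \cite[4.5]{Pu6} gives under either hypothesis (1) or (2), after choosing the superficial element as in \cite[Lemma~11]{It}), the long exact sequence~(\ref{longH}) together with \ref{Artin-vanish} forces $H^2(L^{K}(M))=0$ for $K=I^n$. By Proposition~\ref{vanishing} this yields $\beta^2=0$. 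But the introduction explicitly points to examples with $e_2^I(A)=0$ and $\depth G_{I^n}(A)=1$ for all $n\gg 0$; in these $H^1(G_K(M))\neq 0$, and the computation in Theorem~\ref{gen} shows $H^2(G_K(M))_j\cong H^1(G_K(M))_{j-1}\neq 0$ for $j\le 0$. So $\beta^2=0$ on a nonzero module, and your planned induction on degrees cannot possibly prove injectivity. Your own input $\ov{\beta^1}=0$ from the two--dimensional example is already signalling this: in the ladder of \ref{mod} that zero propagates to $\beta^2$, not away from it.

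The paper's proof goes the other way. It packages the reduction as Theorem~\ref{gen}: from $H^1(L^I(N))=0$ one gets $H^2(L^K(M))=0$ (hence $\beta^2=0$) and, via the first and second fundamental sequences, an equality of lengths $\ell(H^2(G_K(M))_j)=\ell(H^1(G_K(M))_{j-1})$ for all $j$. Since $BH^1=0$ and $H^0(G_K(M))=0$, the map $\beta^1$ is injective; the length equality then upgrades $\beta^1$ to an isomorphism in each degree. Thus $\image\beta^1=H^2(G_K(M))=\ker\beta^2(+1)$ and $BH^2=0$. In short, the vanishing of $BH^2$ here comes from $\beta^1$ being \emph{surjective}, not from $\beta^2$ being injective; the latter is the mechanism in Theorem~\ref{normal}, which has genuinely different hypotheses (there one does not have $H^2(L)=0$).
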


We now state a more general result which implies Theorem \ref{e2}.
\begin{theorem}\label{gen} 
Let $(A,\m)$ be a local ring and  let $M$ be a \CM \ module of dimension $d \geq 3$.  Let $I$ be an ideal of definition for $M$ and let $x$ be $M$-superficial \wrt \ $I$. Set $N = M/xM$. If
 $H^1(L^I(N))  = 0$  then
 for all $n \gg 0$ we have $BH^i(G_{I^n}(M)) = 0$ for $i <3$.
\end{theorem}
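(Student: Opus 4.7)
The strategy is to pass to a large Veronese power $K = I^n$ and extend the proof of Theorem \ref{asymp} by one cohomological degree, showing that $\beta^2$ on $G_K(M)$ is injective. By \ref{Veronese} applied to both $M$ and $N$, pick $n \gg 0$ so that, writing $K = I^n$, we have $H^0(L^K(M)) = H^0(L^K(N)) = 0$, $H^i(L^K(M))_j = 0$ for $j \geq 0$ and $1 \leq i \leq d-1$, $H^i(G_K(M))_j = 0$ for $j \geq 1$ and $1 \leq i \leq d-1$, and (Veronese-ing the hypothesis) $H^1(L^K(N)) = 0$. Theorems \ref{asymp} and \ref{rigid} already give $BH^0(G_K(M)) = BH^1(G_K(M)) = 0$, so it suffices to show $BH^2(G_K(M)) = 0$; since $BH^1 = 0$, this reduces to proving $\beta^2 \colon H^2(G_K(M))(-1) \to H^3(G_K(M))$ is injective.

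Choose a $K$-superficial element $y \in K$ for $M$ with $y^*$ a non-zerodivisor on $G_K(M)$ and set $G = G_K(M)$, $\ov G = G_K(M/yM)$, $\ov L = L^K(M/yM)$. The technical input needed is $H^1(\ov L) = 0$; we obtain this from the hypothesis by combining the second fundamental exact sequence of $(I, x)$ with the Veronese functor. Granting this, we prove $\beta^2$ injective degree by degree. The range $m \geq 2$ is trivial since $H^2(G)_{m-1} = 0$. For $m = 1$, the left square of \ref{mod} (in degree $2$ with $i = 1$) reads
\[
\xymatrix{
H^1(\ov G)_1 \ar[r]^-{\sim} \ar[d]_-{\ov\beta^1_2} & H^2(G)_0 \ar[d]^-{\beta^2_1} \\
H^2(\ov G)_2 \ar@{^{(}->}[r] & H^3(G)_1
}
\]
where the horizontals come from the LES of $0 \to G(-1) \xrightarrow{y^*} G \to \ov G \to 0$: the top is an isomorphism and the bottom an injection because $H^1(G)_1 = H^2(G)_1 = H^2(G)_2 = 0$. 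To see $\ov\beta^1_2$ is injective, use $\ov L/\ov L_1 \cong \ov L(-2)$ (iterating the first FES) and $H^0(\ov L) = 0$ to deduce $H^1(\ov L_1)_2 \hookrightarrow H^1(\ov L)_2 = 0$; then the Bockstein-defining sequence $0 \to \ov G \to \ov L_1 \to \ov G(-1) \to 0$ forces $\ov\beta^1_2$ injective, and the square yields $\beta^2_1$ injective.

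For $m \leq 0$, induct downward via the right square of \ref{mod}. The first FES of $\ov G$, together with $H^0(\ov L) = H^1(\ov L) = 0$, gives $H^1(\ov G) = 0$, so the LES of $0 \to G(-1) \to G \to \ov G \to 0$ makes $y^*$ injective on $H^2(G)$ in every degree. If $\beta^2_{m+1}$ is injective (by induction, base case $m = 1$) and $\beta^2_m(z) = 0$, commutativity gives $\beta^2_{m+1}(y^* z) = 0$, so $y^* z = 0$ and thus $z = 0$. The hardest part is obtaining $H^1(\ov L) = 0$: the hypothesis concerns the quotient $N = M/xM$ for the $I$-superficial $x$, while the reduction appearing in the second FES for $K$ uses $M/yM$ for a $K$-superficial $y$, and bridging these two quotients requires careful bookkeeping via the 2nd FES of $(I,x)$ and the Veronese functor.
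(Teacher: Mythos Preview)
Your strategy of proving $\beta^2$ injective is the wrong target: in the cases of interest $\beta^2$ is actually \emph{zero} with $H^2(G_K(M))\neq 0$. The paper first deduces $H^2(L^I(M))=0$ from $H^1(L^I(N))=0$ via the second fundamental exact sequence and \ref{Artin-vanish}; after passing to $K=I^n$ this gives $H^2(L^K(M))=0$, whence $\beta^2=0$ by Proposition~\ref{vanishing}. A length count (using that $H^1(G_K(M))_j$ and $H^2(G_K(M))_j$ are both isomorphic to $\widetilde{KE}/KE$ in the relevant degrees, where $E=M/x^nM$) then shows the already-injective $\beta^1$ is in fact an isomorphism, so $BH^2=0$. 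Note that the paper never needs $y^*$ to be $G_K(M)$-regular; it works with the $K$-superficial element $x^n$ and the modules $L^K(M)$, $L^K(E)$ directly.

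The concrete error in your argument is the unjustified use of $H^0(\ov L)=0$ for $\ov L=L^K(M/yM)$. You arranged $H^0(L^K(N))=0$ for $N=M/xM$ via the Veronese, but $M/yM$ is a different quotient, and you cannot transport this vanishing. Worse, whenever $\depth G_K(M)=1$ (precisely the interesting case---recall there exist $3$-dimensional CM rings with $e_2^I=0$ and $\depth G_{I^n}(A)=1$ for all $n\gg 0$), modding out by the $G$-regular element $y^*$ gives $\depth \ov G=0$, hence $H^0(\ov L)\supseteq H^0(\ov G)\neq 0$. Your argument then collapses in two places: the base case $m=1$ needs $H^0(\ov L)_0=0$ to get $H^1(\ov L_1)_2\hookrightarrow H^1(\ov L)_2$, and the inductive step needs $H^0(\ov L)=H^1(\ov L)=0$ to force $H^1(\ov G)=0$. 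In fact, had $H^1(\ov G)=0$ held, the injection $H^2(G)(-1)\xrightarrow{y^*}H^2(G)$ together with \ref{Artin-vanish} would give $H^2(G)=0$, i.e.\ $\depth G_K(M)\geq 3$, making the theorem vacuous. So your route cannot succeed unless $\depth G_K(M)\geq 3$ already.
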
 
\begin{proof} 
Since $H^1(L^I(N)) = 0$ it follows from \ref{longH} and \ref{Artin-vanish} that $H^2(L^I(M)) = 0$. We also have an exact sequence
\[
0 \rt H^0(L^I(N))_n \rt H^1(L^I(M))_{n-1} \rt H^1(L^I(M))_n \rt 0
\]
It follows that $H^1(L^I(M))_n  \cong H^1(L^I(M))_{-1}$ for all $n < 0$.

We now chose $n \gg 0$ such that
\[
H^0(L^{I^n}(M)) = 0 \quad \text{and} \quad H^1(L^{I^n}(M))_j = 0 \ \text{for} \   j \geq 0.
\]
We may also assume that $BH^1(G_{I^n}(M)) = 0$.
Set  $K = I^n$. Also note that
\[
H^2(L^K(M)) = 0 \quad \text{and} \quad H^1(L^K(M))_j  \cong H^1(L^K(M))_{-1} \ \text{for} \  j < 0. 
\]
Note $x^n$ is $M$-superficial \wrt \ $K$. Set $E = M/x^n M$.   Also set $L = L^K(M)$ and $\ov{L}  = L^K(E)$. We have an exact sequence
\[
0 \rt H^0(\ov{L})_j \rt H^1(L)_{j-1} \rt H^1(L)_j \rt H^1(\ov{L})_j \rt 0.
\]
So we have $H^1(L)_{-1} \cong H^0(\ov{L})_0 =  \widetilde{KE} /KE$. Also note that as $H^1(L)_{-2} \cong H^1(L)_{-1}$ we get that
$H^1(\ov{L})_{-1} = 0$. It follows that $\xi_K(E) \geq 2$; see \ref{xi}.  It follows that $H^1(\ov{L})_j  = 0$ for $j < 0$. 

Set $G = G_K(M)$ and $\ov{G} = G_K(E)$. As $\xi_K(E) \geq 2$ we have that $H^1(\ov{G})_j = 0$ for $j < 0$ see \ref{xi}. Also note that we have an exact sequence
\[
0 \rt H^1(G)_j \rt H^1(L)_j
\]
So we have $H^1(G)_j = 0$ for $j  \geq 0$.
We  also have an exact sequence
\[
0 \rt H^0(\ov{G})_j \rt H^1(G)_{j-1} \rt H^1(G)_j \rt H^1(\ov{G})_j
\]
It follows that $H^1(G)_{-1}  \cong H^0(\ov{G})_0   = \widetilde{KE}/KE$ and $H^1(G)_j \cong H^1(G)_{-1}$ for $j < 0$.
Thus $H^1(G)_j \cong H^1(L)_j $ for all $j < 0$. By the exact sequence
\[
 0 \rt H^1(G)_j \rt H^1(L)_j \rt H^1(L)_{j-1} \rt H^2(G)_j \rt 0,
\]
 for $j \leq 0$ we have $H^2(G)_j \cong  H^1(L)_{j-1} \cong H^1(G)_{j-1}$. Also note that $H^2(G)_j = 0$ for $j> 0$.
As $H^2(L^K) = 0$, we get  $\beta^2 = 0$; see \ref{vanishing}. Also as $BH^1(G) = 0$ and $H^0(G) = 0$ we have that
$\beta^1$ is injective. 
Consider $\beta^1 \colon H^1(G)(-1) \rt H^2(G)$. Let $\beta^1_j \colon H^1(G)_{j-1} \rt H^2(G)_j$ be a component of $\beta^1$.  Note $\beta^1_j$ is injective as $\beta^1$ is injective. Also as $\ell(H^2(G)_j) = \ell(H^1(G)_{j-1})$ for all $j$ we get that $\beta^1_j$ is an isomorphism. Thus $\beta^1$ is an isomorphism. It follows that
$BH^2(G) =0$.
\end{proof}
We now prove Theorem \ref{e2}.
\begin{proof}[Proof of Theorem \ref{e2}]
By 1.3 we may assume that residue field of $A$ is infinite. So there exists  $x \in I$ an $M$-superficial element  \wrt \ $I$. By \cite[Lemma 11]{It} we may assume that if $I$ is integrally closed then the $A/(x)$ ideal $I/(x)$ is also integrally closed. By \cite[4.5]{Pu6}  the conditions of Theorem \ref{gen} are satisfied. The result follows. 
\end{proof}
\section{Normal ideals}
Recall that an ideal $I$ is said to be normal(asymptotically normal)  if $I^n$ is integrally closed for all $n$ ( for all $n \gg 0$).  Huneke and Huckaba showed that if $\dim A \geq 3$ is \CM \ and $I$ is an $\m$-primary normal ideal then $\depth G_{I^n}(A) \geq 2$ for all $n \gg 0$. They also gave an example of a normal ideal in a $3$-dimensional  \CM \ ring with
$\depth G_{I^n}(A) = 2$ for all $n\geq 1$.  In this section our result is
\begin{theorem}\label{normal}
Let $(A,\m)$ be a \CM \ local ring of dimension $d \geq 3$ and let $I$ be an $\m$-primary normal ideal. Then
for all $n \gg 0$ the Bockstein cohomology modules $BH^i(G_{I^n}(A)) = 0$ for $i = 0, 1,2$. 
\end{theorem}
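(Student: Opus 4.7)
Since Theorem \ref{asymp} together with the rigidity result Theorem \ref{rigid} already yield $BH^0(G_{I^n}(A)) = BH^1(G_{I^n}(A)) = 0$ for $n \gg 0$ without any normality hypothesis, the genuinely new content of Theorem \ref{normal} is the vanishing of $BH^2(G_{I^n}(A))$. The plan is to establish all three vanishings uniformly by verifying the hypothesis of Theorem \ref{gen}. First pass to an infinite residue field in the standard way, and observe that (i) the conclusion is asymptotic in $n$, and (ii) every power of a normal ideal is normal; so $I$ may be freely replaced by any power $I^N$ during the argument.

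By Theorem \ref{gen} applied to $M = A$, it suffices to produce an $A$-superficial element $x \in I$ with $H^1(L^I(A/xA)) = 0$. I would choose $x \in I \setminus I^2$ superficial and, by an Itoh-style genericity argument along the lines of \cite[Lemma 11]{It}, generic enough that $I/(x)$ is integrally closed in $A/xA$. By a filtered refinement of that lemma, together with the normality of $I$ in $A$ and a replacement of $I$ by a further power if necessary, one arranges that $I(A/xA)$ is normal as an $\m$-primary ideal in the $(d-1)$-dimensional \CM\ ring $A/xA$.

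Next I would invoke the appropriate depth result for normal ideals in $A/xA$. If $d - 1 \geq 3$, this is \cite[Theorem 3.1]{HHu} itself, giving $\depth G_{I^m(A/xA)}(A/xA) \geq 2$ for $m \gg 0$; in the boundary case $d = 3$, so $\dim A/xA = 2$, the same bound follows from Narita's theorem combined with the fact that normal $\m$-primary ideals in two-dimensional \CM\ rings satisfy $e_2 = 0$ (due to Itoh/Huneke). In either case $\xi_{I(A/xA)}(A/xA) \geq 2$, and then, since $H^0$ and $H^1$ of $L^I(A/xA)$ have finite length with vanishing component in degree $-1$, the Veronese reformulation \ref{Veronese} allows a further passage to $I^t$ with $t$ large so that $H^0(L^I(A/xA)) = H^1(L^I(A/xA)) = 0$. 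Theorem \ref{gen} then produces the three Bockstein vanishings for $n \gg 0$.

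The principal obstacle is the genericity/normality step: arranging that $I(A/xA)$ is actually normal in $A/xA$, rather than merely integrally closed after a single power, so that the hypothesis $H^1(L^I(A/xA)) = 0$ of Theorem \ref{gen} can be secured for a power of $I$ that we then relabel as $I$. The boundary case $d = 3$ is a secondary but delicate point, since Huckaba-Huneke's theorem in its stated form requires dimension $\geq 3$ and so one must separately invoke the dimension-two analogue for normal ideals.
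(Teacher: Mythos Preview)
Your reduction to Theorem \ref{gen} has a genuine circularity that I do not see how to close. You correctly observe that if $x$ is $I$-superficial with $I(A/xA)$ asymptotically normal, then $\xi_I(A/xA) \geq 2$, so $H^1(L^I(A/xA))_j = 0$ for $j<0$ and $H^1(L^I(A/xA))$ has finite length. Your Veronese step then kills the remaining non-negative degrees: for $t$ large, $H^1(L^{I^t}(A/xA)) = 0$. But Theorem \ref{gen} with the ideal $I^t$ requires an element that is $A$-superficial \emph{with respect to $I^t$}, and your $x$ lies in $I\setminus I^2$, hence not in $I^t$ for $t\geq 2$. So the pair $(I^t, x)$ does not satisfy the hypothesis of Theorem \ref{gen}; the second fundamental exact sequence (\ref{dagg}) linking $L^{I^t}(A)$ to $L^{I^t}(A/xA)$ is simply unavailable. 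Conversely, if you first replace $I$ by $I^t$ and only then choose a superficial $y\in I^t$, you must redo the finite-length and $\xi\geq 2$ arguments for the new quotient $A/yA$, and there is no a priori reason the degree-$0$ piece $H^1(L^{I^t}(A/yA))_0$ vanishes; you are back where you started.

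In fact the paper's own computation confirms that this degree-$0$ piece survives: writing $K=I^n$, $B=A/(x)$, $C=A/(x,y)$ for a $K$-superficial sequence $x,y$, one finds
\[
H^1(L^K(B))_0 \;\cong\; \widetilde{K^2C}/K^2C,
\]
which has no reason to be zero. So the hypothesis $H^1(L^I(N))=0$ of Theorem \ref{gen} is genuinely not available here. The paper instead argues directly, in the manner of Theorem \ref{asymp} but one cohomological degree higher: from $\xi_K(B)\geq 2$ one extracts that $x^*$ acts injectively on $H^2(G)$ in negative degrees, and a single explicit computation at degree $0$ (via the commutative square of \ref{mod}) shows $\beta^2_0$ is injective; downward induction along $x^*$ then gives injectivity of $\beta^2$ in all degrees. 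A minor secondary point: your justification of $\xi\geq 2$ in the boundary case $d=3$ via ``$e_2=0$ for normal ideals in dimension two'' is not the right mechanism---the paper (and Itoh \cite{ItN}) handle all $d\geq 3$ uniformly via the asymptotic normality of $K/(x)$.
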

\begin{proof}
We may assume that the residue field is infinite. 
Assume $n \gg 0$ such that 
$$\depth G_{I^n}(A)  \geq 2  \quad \text{and} \   H^2(L^{I^n})_j = 0 \ \text{for} \ j \geq 0. $$
We also assume,  see \cite[18.3.13]{BSh},  that 
$$H^3(G_{I^n}(A))_j  = 0 \  \text{for} \ j \geq 1. $$
Set $K = I^n$. Let $x,y$ be a $K$-superficial sequence. Set $B =  A/(x)$ and $C = A/(x,y)$. Note we can assume, perhaps going to a faithfully flat extension, that $J = K/(x)$ is an asymptotically normal ideal, see \cite[Theorem 1]{ItN}.  So $\xi_K(B) \geq 2 $.  In particular we have 
\[
H^1(L^K(B))_j   =  H^1(G_K(B))_j = 0  \ \text{for} \  j < 0;  \text{see  \ref{xi}}. 
\]
Set 
$$L = L^K(A), G = G_K(A),  \ov{L} = L^K(B), \ov{G} = G_K(B), L^* = L^K(C) \ \text{and} \ G^* = G_K(C) . $$
We have an exact sequence
\[
0 \rt H^1(\ov{L})_j \rt  H^2(L)_{j-1} \rt H^2(L)_j.
\]
It follows that $H^1(\ov{L})_j = 0$ for $j \geq 1$. We also have an exact sequence
\[
0 \rt H^0(L^*)_j \rt H^1(\ov{L})_{j-1} \rt H^1(\ov{L})_j.
\]
It follows that $H^0(L^*)_j = 0$ for $j = 0$ and $j \geq 2$. Furthermore
$$H^1(\ov{L})_0 \cong H^0(L^*)_1 =  \frac{\widetilde{K^2C}}{K^2C}. $$
By using the exact sequence
\[
0 \rt H^1(\ov{G})_j \rt H^1(\ov{L})_j \rt H^1(\ov{L})_{j-1},
\]  
we get that $H^1(\ov{G})_j = 0$ for $j \geq 1$ and
\[
H^1(\ov{G})_0 \cong H^1(\ov{L})_0 \cong \frac{\widetilde{K^2C}}{K^2C}.
\]
We have an exact sequence
\[
0 \rt H^2(G)_n \rt H^2(L)_n.
\]
So $H^2(G)_n = 0$ for $n \geq 0$.
We now consider the exact sequence
\[
0 \rt H^1(\ov{G})_n \rt H^2(G)_{n-1} \xrightarrow{x^*} H^2(G)_n \rt H^2(\ov{G})_n \rt H^3(G)_{n-1} \xrightarrow{x^*} H^3(G)_n \rt 0.
\]
It follows that
\[
H^1(\ov{G})_0 \cong H^2(G)_{-1} \quad \text{and} \quad H^2(\ov{G})_1 \cong H^3(G)_0.
\]
As $H^1(\ov{G})_n = 0$ for $n < 0$ we have the map 
\begin{equation*}
H^2(G)_{n-1} \xrightarrow{x^*} H^2(G)_n \quad \text{is injective for} \ n < 0. \tag{$\dagger$}
\end{equation*}
Set $\ov{L_1} = L^K_1(B)$. We consider the defining exact sequence for Bockstein cohomology of $\ov{G}$,
\[
0 \rt \ov{G} \rt H^1(\ov{L_1}) \rt \ov{G}(-1) \rt 0.
\]
Since $H^1(\ov{L_1})_1 \subseteq H^1(\ov{L})_1  = 0$ we get that the map
\[
\ov{\beta}^1_1 \colon H^1(\ov{G})_0 \rt H^2(\ov{G})_1 \quad \text{is injective}.
\]
By \ref{mod} we have a commutative diagram
\[
\xymatrix
{
& H^{1}(\ov{G})_0
\ar@{->}[r]
\ar@{->}[d]^{\ov{\beta}^1_1}
& H^{2}(G)_{-1}
\ar@{->}[d]^{\beta^2_0}
\\
& H^{2}(\ov{G})_1
\ar@{->}[r]
& H^{3}(G)_0
}
\]
Note the horizontal maps are  isomorphisms. Since $\ov{\beta}^1_1$ is an inclusion we have that $\beta^2_0$ is an inclusion.

We now  prove that $\beta^2 \colon H^2(G)(-1) \rt  H^3(G)$ is injective.  It suffices to prove that the graded components of $\beta^2$, i.e., the
maps $\beta^2_n \colon H^2(G)_{n-1} \rt H^3(G)_n$ is injective for all $n \in \Z$. As $H^2(G)_n = 0$ for $n \geq 0$ we have trivially that $\ker \beta^2_n = 0$ for $n \geq 1$.
  By induction on $m  = -n$ we prove that $\ker \beta^2_{-m} =  0$ for all $m \geq 0$.   By the above argument we have that $\ker \beta^2_0 = 0$.
  Assume that $m > 0$ and that $\beta^2_{-m+1}$ is injective.  By \ref{mod} we have a commutative diagram
  \[
\xymatrix
{
& H^{2}(G)_{-m-1}
\ar@{->}[r]^{x^*}
\ar@{->}[d]^{\beta^2_{-m}}
& H^{2}(G)_{-m}
\ar@{->}[d]^{\beta^2_{-m+1}}
\\
& H^{3}(G)_{-m}
\ar@{->}[r]^{x^*}
& H^{3}(G)_{-m+1}
}
\]
By $(\dagger)$ the top row is injective. As $\beta^2_{-m+1}$ is injective we have that $\beta^2_{-m}$ is an inclusion. Thus by induction we have shown that $\ker \beta^2_{-m} = 0$ for all
$m \geq 0$. It follows that $\ker \beta^2 = 0$. Thus $BH^2(G_K(A)) = 0$.
\end{proof}

\section{ $e_2 = e_1 - e_0 + 1, e_2 =3$ and dim $A = 3$}
In this section we prove the following result
\begin{theorem}\label{chi2}
Let $(A,\m)$ be Cohen-Macaulay of dimension three. If $e_2 = e_1 - e_0 + 1$ and $ e_2 = 3$ then
$BH^i(G_\m(A)) = 0$ for $i \leq 2$.
\end{theorem}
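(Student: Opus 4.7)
The plan is to adapt the diagram-chase strategy used in the proof of Theorem~\ref{normal} to the non-asymptotic setting $I=\m$ in dimension three. First I would pass, via the faithfully flat extension $A[X]_{\m A[X]}$, to the case that the residue field of $A$ is infinite; this preserves the Hilbert coefficients and commutes with Bockstein cohomology. Choose a superficial element $x\in\m\setminus\m^2$ and set $B=A/(x)$. Then $B$ is a two-dimensional Cohen--Macaulay ring with $e_i(B)=e_i(A)$ for $i=0,1,2$, so the boundary condition $e_2(B)=e_1(B)-e_0(B)+1=3$ descends to $B$. By Theorem~\ref{rigid} it is enough to prove $BH^1(G)=0$ and $BH^2(G)=0$, where $G=G_\m(A)$; the vanishing $BH^0(G)=0$ will then follow automatically.

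Next, I would extract the needed local cohomology information from the structure theory available under this numerical boundary hypothesis. Specifically, the target is to show $\depth G_\m(A)\geq 2$ (equivalently, $\xi_\m(A)\geq 2$ in the notation of \ref{xi}) together with the vanishings $H^1(L^{\m B}(B))_j=0$ for $j\geq 0$ and $H^2(L^{\m}(A))_j=0$ for $j\geq 0$, plus the standard reduction-number estimate $H^3(G)_j=0$ for $j\geq 1$. These play the roles of the analogous statements at the start of the proof of Theorem~\ref{normal}: they guarantee that $H^1(G)_j$ and $H^2(G)_j$ vanish for $j\geq 0$, and they let one identify $H^1(G)_{-1}\cong H^0(G_\m(B))_0 \cong \widetilde{\m B}/\m B$ and $H^2(G)_{-1}\cong H^1(G_\m(B))_0$ via the long exact sequence \ref{longH}.

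With these quantitative inputs in hand, the endgame mirrors the proof of Theorem~\ref{normal} essentially verbatim. From the defining short exact sequence $0\to \ov{G}\to\ov{L}_1\to\ov{G}(-1)\to 0$ together with the vanishing $H^1(\ov{L}_1)_1=0$ one concludes that $\ov{\beta}^1_1$ is injective. Plugging this into the commutative square from \ref{mod},
\[
\xymatrix{
H^1(\ov{G})_0 \ar[r]^{\cong}\ar[d]^{\ov{\beta}^1_1} & H^2(G)_{-1}\ar[d]^{\beta^2_0}\\
H^2(\ov{G})_1\ar[r]^{\cong} & H^3(G)_0,
}
\]
forces $\beta^2_0$ to be injective. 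An induction on $m\geq 0$, using the injectivity of $x^*\colon H^2(G)_{n-1}\to H^2(G)_n$ for $n<0$ (itself read off from the long exact sequence associated to $0\to G(-1)\xrightarrow{x^*}G\to\ov{G}\to 0$), then shows $\beta^2_{-m}$ is injective for all $m\geq 0$; hence $\beta^2$ is injective and $BH^2(G)=0$. Running the same recipe one cohomological degree lower delivers $BH^1(G)=0$, and Theorem~\ref{rigid} produces $BH^0(G)=0$.

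The principal obstacle is the second step: establishing, without recourse to a Veronese passage, that the hypothesis $e_2=e_1-e_0+1=3$ is strong enough to force the precise vanishings of $H^i(L^\m(A))$ and $H^i(L^{\m B}(B))$ in the required degrees. In Theorems~\ref{asymp}, \ref{e2} and \ref{normal} the shortcut~\ref{Veronese} guaranteed these vanishings after replacing $\m$ by $\m^n$ for $n\gg 0$; here one must extract them directly from the Hilbert function of $A$, presumably by a careful analysis of $\widetilde{\m^n}/\m^n$ together with Rossi--Valla-type bounds relating $e_1$, $e_2$ and the reduction number of $\m$. Once these quantitative ingredients are secured, the diagram chase described above is routine and follows the template of Section~7 line by line.
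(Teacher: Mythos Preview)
Your plan has a genuine and fatal gap at the second step, and it is not merely a quantitative obstacle: the vanishings you hope to secure simply do not hold under the hypothesis $e_2=e_1-e_0+1=3$. Proposition~\ref{dim2} shows that in dimension two this condition forces $\depth G_\m(B)\in\{0,2\}$; lifting via ordinary Sally descent gives $\depth G_\m(A)\in\{0,1,3\}$, so $\depth G_\m(A)=2$ \emph{never} occurs and the nontrivial work lies entirely in the cases $\depth G=0$ and $\depth G=1$. In the latter one finds $H^1(G)_0\cong\widetilde{\n^2}/\n^2\neq 0$ and $H^2(G)_1\neq 0$, so your claimed vanishing of $H^i(G)$ in non-negative degrees is false and the induction of Theorem~\ref{normal} has no base case; in the former $H^0(G)\neq 0$, so there is no $G$-regular $x^*$ with which to form the diagram of \ref{mod} at all. (Incidentally, $\depth G\geq 2$ and $\xi_\m(A)\geq 2$ are not equivalent: by \ref{xi} the latter only says $H^1(G)_n=0$ for $n<0$.)

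The paper proceeds instead by this case split. When $\depth G=3$ there is nothing to prove. When $\depth G=0$, Proposition~\ref{dim2} supplies precise Ratliff--Rush information on $B$, namely $\widetilde{\n^j}=\n^j$ for $j\neq 2$ and $\ell(\widetilde{\n^2}/\n^2)=1$; this is lifted via \ref{longH} to prove $H^1(L^\m(A))=H^2(L^\m(A))=0$, whence $H^2(G)=0$, $\beta^1=0$ by Proposition~\ref{vanishing}, and a direct length comparison shows $\beta^0\colon H^0(G)(-1)\to H^1(G)$ is an isomorphism. When $\depth G=1$, one checks that $x^*$ is regular on $H^1(G)^\vee$ and $H^2(G)^\vee$ and then invokes the Sally descent for Bockstein cohomology, Theorem~\ref{Sally-descent}, reducing to the two-dimensional vanishing $BH^i(\ov{G})=0$ for $i\leq 1$ already established. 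The essential ingredients you are missing are Proposition~\ref{dim2} and Theorem~\ref{Sally-descent}; no amount of Rossi--Valla-type estimation will let the template of Theorem~\ref{normal} run uniformly here.
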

To prove this result we need the following:
\begin{proposition}\label{dim2}
Let $(A,\m)$ be a \CM  \ local ring of dimension two and with an infinite residue field. Assume $e_2 = e_1 - e_0 + 1 = 3$.  Set $G = G_\m(A)$. Then
\begin{enumerate}[\rm(1)]
\item
$\depth G = 0$ or $2$.
\item
If $\depth G = 0$ then $\widetilde{\m^2} \neq \m^2$ and $\widetilde{\m^j} = \m^j$ for all $j \neq 2$. Furthermore $\ell(\widetilde{\m^2}/\m^2) = 1$.
\end{enumerate}
\end{proposition}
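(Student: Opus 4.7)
The approach combines Puthenpurakal's structure theorems for the $e_2 = e_1 - e_0 + \ell(A/I)$ case with a Hilbert-series computation driven by Serre's formula. Because $\m$ is automatically integrally closed as a maximal ideal, \cite[4.3--4.5]{Pu6} applies and yields $\widetilde{\m^{n+1}} \subseteq \m^n$ for every $n \geq 1$ and $H^1(L) = 0$, where $L := L^\m(A)$. Taking local cohomology of the first fundamental exact sequence $0 \to G \to L \to L(-1) \to 0$, the map $H^0(L)_n \to H^0(L)(-1)_n$ is the natural map $\widetilde{\m^{n+1}}/\m^{n+1} \to \widetilde{\m^n}/\m^n$ induced by the quotient $A/\m^{n+1} \to A/\m^n$, and it vanishes because $\widetilde{\m^{n+1}} \subseteq \m^n$. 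Combined with $H^1(L) = 0$ this forces
\[
H^0(G) \cong H^0(L), \qquad H^1(G) \cong H^0(L)(-1) \cong H^0(G)(-1).
\]

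For (1), if $\depth G \geq 1$ then $H^0(G) = 0$; the isomorphisms above then give $H^0(L) = 0$ and hence $H^1(G) = 0$, so $\depth G = 2$. Thus $\depth G \in \{0,2\}$.

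For (2), assume $\depth G = 0$ and set $f_n := \ell(\widetilde{\m^{n+1}}/\m^{n+1}) = \ell(H^0(G)_n)$; then $f_0 = 0$ (since $\widetilde{\m} = \m$) and $\ell(H^1(G)_n) = f_{n-1}$. Serre's formula $\ell(G_n) - P_G(n) = \sum_i (-1)^i \ell(H^i(G)_n)$ combined with these identifications yields, after multiplication by $(1-z)^2$,
\[
h(z) = e_0 - e_1(1-z) + (1-z)^3 F(z) + (1-z)^2 P_\Phi(z),
\]
where $F(z) = \sum_n f_n z^n$ and $P_\Phi(z) = \sum_{n \geq 0} \ell(H^2(G)_n) z^n$. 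Evaluating at $z = 0$ gives $P_\Phi(0) = 1 - e_0 + e_1 = e_2 = 3$, and matching $h''(1) = 2e_2$ gives $P_\Phi(1) = 3$; since $P_\Phi$ has non-negative integer coefficients, this forces $\ell(H^2(G)_0) = 3$ and $H^2(G)_n = 0$ for $n \geq 1$. Computing higher derivatives of $h$ at $z = 1$ produces the tower $e_{k+3} = -\sum_n \binom{n}{k} f_n$ for $k \geq 0$, so in particular $e_3 = -\sum_n f_n$ and $e_4 = -\sum_n n f_n$. The main obstacle is to upgrade this numerical data to the precise statement $F(z) = z$: I expect this to follow by coupling the tower of identities with positivity of the Hilbert function $H_n \geq 0$, with the bound $f_n \leq \ell(\m^n/\m^{n+1})$ coming from $\widetilde{\m^{n+1}} \subseteq \m^n$, and with the rigidity furnished by the identification $H^1(G) = H^0(G)(-1)$; together these should force $\sum_n f_n = 1$ with the single nonzero contribution at $n = 1$, thereby yielding $\ell(\widetilde{\m^2}/\m^2) = 1$ and $\widetilde{\m^j} = \m^j$ for all $j \neq 2$.
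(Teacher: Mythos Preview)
Your argument for (1) is correct and in fact cleaner than the paper's. The paper rules out $\depth G = 1$ by passing to $B = A/(x)$, invoking \cite[Proposition 13]{Pu1} and a result of Sally, and then applying Sally descent; you instead use the structural facts $\widetilde{\m^{n+1}} \subseteq \m^n$ and $H^1(L)=0$ from \cite{Pu6} to obtain $H^1(G)\cong H^0(G)(-1)$ directly, whence $H^0(G)=0$ forces $H^1(G)=0$. That is a genuine simplification. Your Hilbert-series computation through $P_\Phi(0)=P_\Phi(1)=3$, and hence $\ell(H^2(G)_0)=3$ with $H^2(G)_n=0$ for $n\ge 1$, is also correct and is extra information the paper does not extract.

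The gap is in (2), and you have named it yourself. The ``tower'' $e_{k+3} = -\sum_n \binom{n}{k} f_n$ does not constrain the $f_n$: the hypotheses fix only $e_0,e_1,e_2$, so these identities merely \emph{define} $e_3,e_4,\ldots$ in terms of the unknown $f_n$. The positivity bound $f_n\le H_n$ unwinds to $f_{n-1}\le e_0(n+1)-e_1$, which is vacuous for $n\ge 2$, and the identification $H^1(G)\cong H^0(G)(-1)$ has already been fully used in setting $\ell(H^1(G)_n)=f_{n-1}$. Nothing in your list singles out $F(z)=z$ among all non-negative integer polynomials with $F(0)=0$; in particular you cannot recover $\sum_n f_n = 1$ from what you have written. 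To finish (2) one needs genuinely new input beyond the Serre-formula bookkeeping.

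The paper supplies that input by reducing modulo a superficial sequence. With $J=(x,y)$ a minimal reduction it uses Itoh's formulas $e_1=\sum_j\sigma_j$, $e_2=\sum_j j\sigma_j$ for $\sigma_j=\ell(\widetilde{\m^{j+1}}/J\widetilde{\m^j})$ to get $\sigma_j=0$ for $j\ge 2$ and $\sigma_1=3$; then the depth results of Rossi--Valla and Wang force $\ell(\m^2/J\m)=2$, hence $\ell(\widetilde{\m^2}/\m^2)=1$. Passing to $B=A/(x)$ and comparing $e_2(A)$ with $e_2(B)$ via \cite[Corollary 10]{Pu1} gives $\sum_{n\ge 2}\ell\bigl((\m^{n+1}\!:\!x)/\m^n\bigr)=1$, and Huckaba's theorem forces this single contribution to occur at $n=2$; the exact sequence $0\to (\m^{i+1}\!:\!x)/\m^i \to \widetilde{\m^i}/\m^i \to \widetilde{\m^{i+1}}/\m^{i+1}$ then yields $\widetilde{\m^j}=\m^j$ for $j\ge 3$. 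In short, the missing idea is to bring in a superficial element and exploit one-dimensional information; your purely numerical framework cannot see this.
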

\begin{proof}
Let $x,y$ be an $A$-superficial sequence  \wrt \ $\m$. Set $J = (x,y)$, $B = A/(x)$ and $\n  = \m/(x)$. 

(1) If $\depth G = 1$ then we have $e_2(B) = e_1(B) - e_0(B) + 1$. Using \cite[Proposition 13]{Pu1} it follows that $\n^3 = y\n^2$. So by \cite[2.1]{Sally2} we have that $\depth G_\n(B) = 1$. So by Sally descent we have $\depth G = 2$ a contradiction.

(2) For $j \geq 0$ set $\sigma_j = \ell(\widetilde{\m^{j+1}}/J\widetilde{\m^{j}})$. Then by \cite[Theorem 3]{It} we have $e_1 = \sum_{j \geq 0} \sigma_j$ and $e_2 = \sum_{j \geq 1}j\sigma_j$. Since $e_2 = e_1 - e_0 + 1$ we get $\sigma_j = 0$ for $j \geq 2$ and
$e_2 = \sigma_1 = \ell(\widetilde{\m^{2}}/J\m) = 3$. 

If $\widetilde{\m^2} = \m^2$ then as $\sigma_2 = 0$ we get $\widetilde{\m^3} = J\m^2 $. It follows that $\widetilde{\m^3} = \m^{3}$. As $\sigma_j = 0$ for $j \geq 2$ inductively one can show that $\widetilde{\m^j} = \m^j$ for $j \geq 2$. It follows that $\depth G \geq 1$, a contradiction. Therefore $\widetilde{\m^2} \neq \m^2$.

Note 
\[
3 = e_2 = \ell(\widetilde{\m^{2}}/J\m) = \ell(\widetilde{\m^2}/\m^2) + \ell(\m^2/J\m).
\]
It follows that $\ell(\m^2/J\m) \leq 2$. If $\ell(\m^2/J\m) \leq 1$ then by  \cite{rv1} or \cite{W} we have $\depth G \geq 1$, a contradiction. Thus $\ell(\m^2/J\m) = 2$. It follows that $\ell(\widetilde{\m^2}/\m^2) = 1$.
 
 As $e_i(B) = e_i$ for $i \leq 1$, \cite[Corollary 10]{Pu1} we get $e_1(B) = e_0(B) + 2$. 
 Note $e_1(B) = \sum_{j\geq 0}\ell(\n^{j+1}/y\n^j)$. Also $\ell(\n^2/y\n) = \ell(\m^2/J\m) = 2$. It follows that $\ell(\n^3/y\n^2) = 1$ and $\n^4 = y \n^3$. 
 Thus $e_2(B) =  \sum_{j\geq 1}j\ell(\n^{j+1}/y\n^j) = 4$.

By  \cite[Corollary 10]{Pu1} we have
\[
e_2 = e_2(B) - \sum_{n \geq 2} \ell((\m^{n+1} \colon x)/\m^n).
\]
So $ \sum_{n\geq 2} \ell((\m^{n+1} \colon x)/\m^n) = 1$.  We have an exact sequence, see \cite[p.\ 305]{rv2}
\[
0 \rt \frac{(\m^3 \colon x)}{\m^2} \rt \frac{\m^3}{J\m^2} \rt \frac{\n^3}{y\n^2} \rt 0.
\]
If $(\m^3 \colon x) = \m^2$ then $\ell(\m^3/J\m^2) = 1$. So by a result due to Huckaba \cite{H} we get $\depth G \geq 1$, a contradiction. So $(\m^3 \colon x) \neq \m^2$.
It follows that $(\m^{n+1} \colon x) = \m^n $ for $n \geq 3$.  
For all $i \geq 0$ we have an exact sequence, see \cite[2.6]{Pu5},
 \begin{equation}\label{rr-x2}
  0 \rt \frac{(\m^{i+1}\colon x)}{\m^{i}} \rt \frac{\widetilde{\m^i}}{\m^i}  \rt \frac{\widetilde{\m^{i+1}}}{\m^{i+1}} 
 \end{equation}
 It follows that $\widetilde{\m^j} = \m^j$ for $j \geq 3$.
\end{proof}
We now give
\begin{proof}[Proof of Theorem \ref{chi2}]
Set $G = G_\m(A)$.
By Proposition 8.2 it easily follows that  $\depth G = 0,1$ or $3$.

\textit{Case 1}: $\depth G = 3$ \\
Then $H^i(G) = 0$ for $i<3$. It follows that $BH^i(G) = 0$ for $i < 3$.

 \textit{Case 2}:  $\depth G = 0$. \\
 We may assume that the residue field of $A$ is infinite, see 1.3.  Let $x\in \m$ be $\m$-superficial. Set $B= A/(x)$ and $\n = \m/(x)$.  Also set $\ov{G} = G_\n(B)$, $L = L^\m(A)$ and 
 $\ov{L} = L^\n(B)$.
 By Sally descent $\depth \ov{G} =0$.  By \cite[4.4]{Pu6}
 we have $H^1(\ov{L}) = 0$. So $H^2(L) = 0$. By \ref{dim2} we also have that $\widetilde{\n^j} = \n^j$ for $j \neq 2$ and
 $\widetilde{\n^2}/\n^2 \cong k$. 
 For all $i \geq 0$ we have an exact sequence, see \cite[2.9]{Pu5},
 \begin{equation}\label{rr-x}
  0 \rt \frac{(\m^{i+1}\colon x)}{\m^{i}} \rt \frac{\widetilde{\m^i}}{\m^i}  \rt \frac{\widetilde{\m^{i+1}}}{\m^{i+1}} \rt \frac{\widetilde{\n^{i+1}}}{\n^{i+1}}
 \end{equation}
 Claim 1: $\widetilde{\m^2} \neq \m^2$.
 If this is not the case then as $\widetilde{\n^j} = \n^j$ for all $j \geq 3$ we have that $\widetilde{\m^j} =\m^j$ for all $j \geq 3$.  Also trivially $\widetilde{\m} = \m$. Thus 
 $\widetilde{\m^j} = \m^j$ for all $j \geq 1$. It follows that $\depth G \geq 1$. This is a contradiction. Thus $\widetilde{\m^2} \neq \m^2$.
 
 Since $\ell(\widetilde{\n^2} /\n^2) = 1$, $\widetilde{\m} = \m$ and $\widetilde{\m^2}  \neq \m^2$  we have an isomorphism 
 \[
 \frac{\widetilde{\m^2}}{\m^2}  \rt \frac{\widetilde{\n^2}}{\n^2}
 \]
 It follows that
 \[
 \ov{\widetilde{\m^2}} =  \widetilde{\n^2}.
 \]
 Also as $\widetilde{\n^j} = \n^j$ for $j \geq 3$ we have that 
 \[
 \ov{\widetilde{\m^j}} =  \widetilde{\n^j}   \quad \text{for all} \ j.
 \]
 Thus the natural map $H^0(L) \rt H^0(\ov{L})$ is surjective. By \ref{longH} we have an inclusion $H^1(L)(-1) \rt H^1(L)$. By \ref{Artin-vanish} it follows that
   $H^1(L) = 0$.  As $H^2(L) = 0$ also, it follows from \ref{dag} that $H^2(G) = 0$. So trivially we have
  that $BH^2(G) = 0$. As $H^1(L) = 0$ we have that $\beta^1 = 0$.
  
  By (\ref{rr-x}) we also have that for $j \geq 2$
  $$\widetilde{\m^{j+1}}  = x \widetilde{\m^{j}}  + \m^{j+1}. $$
  As $\widetilde{\m^2} \subseteq \m$, iteratively we have that
  $$ \widetilde{\m^{j+1}} \subseteq \m^j \quad \text{for all} \ j \geq 2. $$ 
  It follows that $BH^0(G) = 0$, i.e., $\beta^0 $ is injective. We also have an exact sequence
  \[
  0 \rt H^0(G) \rt H^0(L) \rt H^0(L)(-1) \rt H^1(G) \rt H^1(L) =0.
  \]
  Notice $H^0(G) = H^0(L)$. It follows that $H^1(G) \cong H^0(G)(-1)$. It follows that $\beta^0$ is surjective too. Thus $BH^1(G) = 0$.
 
\textit{Case 3}: $\depth G = 1$.  \\
Let $x$ be $A$-superficial \wrt \ $\m$.  Set $B= A/(x)$ and $\n = \m/(x)$.  Also set $\ov{G} = G_\n(B)$, $L = L^\m(A)$ and 
 $\ov{L} = L^\n(B)$. Note $x^*$ is $G$-regular.  By 3.5 we have $BH^i(\ov{G}) = 0$ for $i = 0,1$. We will use Sally descent to conclude that $BH^i(G) = 0$ for $i = 0,1,2$.

Set $u = xt \in \R(I)_1$. Since $H^1(\ov{L}) = 0$ we have an exact sequence
\[
0 \rt H^0(\ov{L})_n \rt H^1(L)_{n-1} \xrightarrow{u} H^1(L)_n \rt 0.
\]
Since $H^0(\ov{L})_n = 0$ for $n \geq 2$ we have $H^1(L)_n = 0$ for $n \geq 1$.  We also get
\[
H^1(L)_0 \cong H^0(\ov{L})_1 = \frac{\widetilde{\n^2}}{\n^2}.
\]
Since $H^0(\ov{L})_j = 0$ for $j \leq 0$, we obtain 
isomorphisms
\[
H^1(L)_{n-1} \xrightarrow{u}  H^1(L)_n \quad \text{for} \ n\leq 0
\] 
Thus $$H^1(L)_n  \cong \frac{\widetilde{\n^2}}{\n^2 }  \quad \text{for} \ n\leq 0,$$
and $u$ is $H^1(L)^\vee$-regular. 

We have an exact sequence
\[
0 \rt H^1(G)_n \rt H^1(L)_n.
\]
It follows that $H^1(G)_n = 0$ for $n \geq 1$.
We also have an exact sequence
\[
0 \rt H^0(\ov{G})_n \rt H^1(G)_{n-1} \xrightarrow{x^*} H^1(G)_n \rt H^1(\ov{G})_n.
\] 
So we obtain 
\[
H^1(G)_0 \cong H^0(\ov{G})_1 = \frac{\widetilde{\n^2}}{\n^2}.
\]
Also note that $H^1(\ov{G}) = H^0(\ov{G})(-1)$. It follows that $H^1(\ov{G})_n = 0$ for all $n \neq 2$.  We also have $H^0(\ov{G})_n = 0$ for $n \leq 0$. 
Thus  we obtain isomorphisms
\[
H^1(G)_{n-1} \xrightarrow{x^*}  H^1(G)_n \quad \text{for} \ n\leq 0
\] 
So $$H^1(G)_n  \cong \frac{\widetilde{\n^2}}{\n^2 }  \quad \text{for} \ n\leq 0,$$
and $x^*$ is $H^1(G)^\vee$-regular.

As $H^2(L) = 0$ we obtain an exact sequence
\[
0 \rt H^1(G) \rt H^1(L) \rt H^1(L)(-1) \rt H^2(G) \rt 0.
\] 
Note 
$$H^1(G)_n = H^1(L)_n = 0 \ \text{for} \ n \geq 1 \ \text{and} \ H^1(G)_{n} \cong H^1(L)_n =   \frac{\widetilde{\n^2}}{\n^2} \ \text{for} \ n \leq 0.$$
Thus $H^1(G) \cong H^1(L)$. It follows that $H^2(G) \cong H^1(L)(-1) \cong H^1(G)(-1)$. It follows that $x^*$ is $H^2(G)^\vee$-regular.
Thus by Sally descent we get that $BH^i(G) = 0$ for $i<3$.
\end{proof}

\end{document}